\theoremstyle{plain}
\newtheorem{theorem}{Theorem}[section]
\newtheorem{corollary}[theorem]{Corollary}
\newtheorem{proposition}[theorem]{Proposition}
\newtheorem{lemma}[theorem]{Lemma}
\theoremstyle{definition}
\newtheorem{definition}[theorem]{Definition}
\newtheorem{remark}[theorem]{Remark}
\theoremstyle{remark}
\newcommand\op{\operatorname}
\newcommand\R{\mathbb{R}}
\newcommand\x{\boldsymbol{x}}
\newcommand\w{\boldsymbol{w}}
\newcommand\s{\boldsymbol{s}}
\newcommand\bk{\boldsymbol{k}}
\newcommand\f{\boldsymbol{f}}
\newcommand\by{\boldsymbol{y}}
\def\GG{\mathcal{G}}
\newcommand{\supp}{\op{supp}}
\noindent\makebox[0mm][r]{$\bullet$}}
\begin{document}

\title{Autocatalytic systems and recombination: a reaction network perspective}

\author[1]{Gheorghe Craciun}
\author[2]{Abhishek Deshpande}
\author[3]{Badal Joshi}
\author[4]{Polly Y. Yu}
\affil[1]{Department of Mathematics and Department of Biomolecular Chemistry, University of Wisconsin-Madison, {\tt craciun@math.wisc.edu}.}
\affil[2]{Department of Mathematics, University of Wisconsin-Madison, {\tt deshpande8@wisc.edu}.}
\affil[3]{Department of Mathematics, California State University San Marcos, {\tt bjoshi@csusm.edu}.}
\affil[4]{Department of Mathematics, University of Wisconsin-Madison, {\tt pollyyu@math.wisc.edu}.}

\maketitle
\begin{abstract}
Autocatalytic systems are very often incorporated in the ``origin of life" models, a connection that has been analyzed in the context of the classical hypercycles introduced by Manfred Eigen. We investigate the dynamics of certain networks called \emph{bimolecular autocatalytic systems}. In particular, we consider the dynamics corresponding to the {\em relative populations} in these networks, and show that they can be analyzed by studying  well-chosen  autonomous 
polynomial dynamical systems. Moreover, we find that one can use results from {\em reaction network theory} to prove persistence and permanence of several types of bimolecular autocatalytic systems called \emph{autocatalytic recombination networks}.
\end{abstract}

\section{Introduction}

Biological networks display a wide range of sophisticated dynamical behaviours. A particular instance of this  is manifested in autocatalytic reaction networks called {\em hypercycles}, which have appeared in the ``origin of life" models. Introduced by Manfred Eigen~\cite{eigen1971selforganization} and extended in collaboration with Peter Schuster~\cite{eigen1977principle,eigen1982stages},  hypercycles consist of cyclic connections of molecules that replicate each other by mutual catalysis. The hypercycles are particular instances of the replicator equation~\cite{schuster1983replicator}, and have been proposed as a possible solution to Eigen's paradox~\cite{eigen1971selforganization}, a famous problem in evolutionary biology. The classical three dimensional hypercycle is given by the following network: $\{X_1 + X_2 \rightarrow X_1 + 2X_2, X_2 + X_3 \rightarrow X_2 + 2X_3, X_3 + X_1 \rightarrow X_3 + 2X_1\}$. The dynamical equations corresponding to the classical hypercycle are given by
\begin{eqnarray}\label{eq:hypercycle}
\begin{split}
\dot{x_1} &= x_1x_3\\
\dot{x_2} &= x_1x_2\\
\dot{x_3} &= x_2x_3
\end{split}
\end{eqnarray}
As is evident from Equation~(\ref{eq:hypercycle}), the rate equations corresponding to each species consist of only positive terms. Consequently, the species populations can become unbounded in finite time~\cite{joshi2020autocatalytic}. Therefore, we analyze the {\em relative populations} of species in such networks. Taking cue from the classical hypercycle, we consider generalized models called \emph{bimolecular autocatalytic systems}, where every reaction is of the form $X_i + X_j \rightarrow X_i + X_j +X_k$, where $i,j,k\in\{1,2,3,...,n\}$. A priori, the dynamics of {\em relative} populations may {\em not} be given by autonomous polynomial differential equations, even if that of the absolute populations is. Nevertheless, we show that the relative populations of species in these networks are, after time re-scaling, solutions of autonomous 
polynomial dynamical systems. 

Hofbauer, Schuster, Sigmund and Wolff  ~\cite{hofbauer1981general,schuster1978dynamical,hofbauer1980dynamical,schuster1979dynamical,hofbauer1984difference}  analyzed a dynamical property called \textit{cooperation}, which roughly implies that no species can go extinct. This is related with the property of \textit{persistence} in the theory of reaction networks~\cite{craciun2013persistence}, which means that given an initial condition $\x(0)\in\mathbb{R}^n_{>0}$, we have $\displaystyle\liminf_{t\rightarrow\infty} x_i(t) >0$ for all species concentrations $x_i$ in the network. We show that results from the theory of reaction networks~\cite{ craciun2013persistence, pantea2012persistence, anderson2011proof, gopalkrishnan2014geometric} can be used to prove cooperation in several kinds of \emph{autocatalytic recombination networks}. 

The paper is organized as follows. In Section~\ref{sec:reaction_networks} we introduce some definitions and notations from the theory of reaction networks. Reaction networks can be regarded as graphs embedded in Euclidean space, called Euclidean embedded graphs (E-graphs). More specifically, we define {\em weakly reversible}, {\em endotactic}, and {\em strongly endotactic} reaction networks. Further, we define the notions of {\em persistence} and {\em permanence} and relate them to the \emph{Global Attractor  Conjecture}~\cite{craciun2009toric}. In Section~\ref{sec:autocatalysis} we analyze the dynamics of certain bimolecular autocatalytic systems. In Theorem~\ref{thm:dynamics_relative_concentrations} we show that the relative populations in these networks can be obtained as time re-scaled solutions of polynomial mass-action systems. In Theorem~\ref{thm:relative_autocatalysis} we explicitly characterize the reaction networks that generate the dynamics of relative populations in such networks. Finally, in Section~\ref{sec:examples}, we analyze examples of certain bimolecular autocatalytic systems called \emph{autocatalytic recombination networks} where the dynamical systems generated by relative populations can be shown to be persistent/permanent using results from reaction network theory. 


\section{Reaction networks}\label{sec:reaction_networks}

Here, we recall some basic terminology from the theory of reaction networks~\cite{gunawardena2003chemical,feinberg1989necessary,feinberg1987chemical,feinberg1988chemical,feinberg1972chemical}. A \emph{reaction network} can be represented as a directed graph $\GG=(V,E)$, where $V\subset\mathbb{R}^n$ and $E$ are the sets of vertices and edges respectively. Such graphs have also been called \emph{Euclidean embedded graphs} (abbreviated as E-graphs)~\cite{craciun2015toric,craciun2019polynomial,craciun2019endotactic}. In what follows, we refer to the edges in $\GG$ as reactions. If $(\s,\s')\in E$, then we write $\s\to \s'\in E$, where $\s$ is the \emph{source} vertex and $\s'$ is the \emph{target} vertex. The \textit{stoichiometric subspace} $S$ of a network is the vector space $S=\displaystyle\textrm{span}\{\s'-\s \mid \s\to \s'\in E \}$. For $\x_0\in\mathbb{R}^n_{>0}$, the  compatibility class of $\x_0$ is the affine subspace $(\x_0+ S)$, and the positive compatibility class of $\x_0$ is the polyhedron $(\x_0+ S)\cap\mathbb{R}^n_{> 0}$.\\

\noindent Let $\GG=(V,E)$ be a reaction network and let $V_S$ denote the set of source vertices. Then 
\begin{enumerate}[(i)]
\item $\GG$ is \textit{reversible} if $\s\to \s'\in E$ implies $\s'\to \s\in E$;
\item $\GG$ is \textit{weakly reversible} if each reaction is part of a cycle;
\item $\GG$ is \textit{endotactic}~\cite{craciun2013persistence} if for every $\w\in\mathbb{R}^n$ and $\s\to \s'\in E$ with $\w\cdot (\s'-\s)<0$, there exists $\tilde{\s}\to\tilde{\s}^{'}\in E$ such that $\w\cdot (\tilde{\s}^{'}-\tilde{\s})>0$ and $\w\cdot \tilde{\s} < \w\cdot \s$;
\item $\GG$ is \textit{strongly endotactic}~\cite{gopalkrishnan2014geometric} if for every $\w\in\mathbb{R}^n$ and $\s\to \s'\in E$ with $\w\cdot (\s'-\s)<0$, there exists $\tilde{\s}\to\tilde{\s}^{'}\in E$ such that $\w\cdot(\tilde{\s}^{'}-\tilde{\s})>0$, $\w\cdot \tilde{\s} < \w\cdot \s$ and $\w\cdot \tilde{\s} \leq \w\cdot \hat{\s}$ for all $\hat{\s}\in V_S$; and
\item a set  $L\subseteq V$ is a \emph{linkage class}~\cite{feinberg1987chemical} if $L$ is a maximal connected component of $\GG$.
\end{enumerate}
Every strongly endotactic reaction network is endotactic. A weakly reversible network is endotactic~\cite{craciun2013persistence}. Further, a weakly reversible reaction network with a single linkage class is strongly endotactic~\cite{gopalkrishnan2014geometric}. 
Proposition~\ref{test:parallel_sweep}, or the \emph{parallel sweep test}, provides a geometric test for strong endotacticity~\cite{gopalkrishnan2014geometric,craciun2013persistence}.

\begin{proposition}\label{test:parallel_sweep}
Consider a reaction network $\GG$ and its stoichiometric subspace $S$. For every vector $\w \not\in S^\perp$, let $H $ be the hyperplane perpendicular to $\w$ that contains a source vertex $\s_1$ such that for every other source vertex $\s_2$, we have $(\s_2-\s_1)\cdot \w \geq 0$. If for every reaction $\s \to \s'$ with $s \in H$,
we have $(\s'-\s)\cdot \w \geq 0$, and there exists a reaction $\s_0 \to \s'_0$ with $\s_0 \in H$ such that $(\s'_0-\s_0)\cdot \w > 0$, then the reaction network is said to have \textbf{passed} the parallel sweep test and is strongly endotactic. Else, it is not strongly endotactic.

\end{proposition}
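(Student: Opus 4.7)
The plan is to verify the three defining conditions of strong endotacticity directly, using the sustaining reaction guaranteed by the parallel sweep test as the witness. The proposition is essentially a reformulation, and the real content lies in matching up the geometric data supplied by the test with the algebraic conditions in the definition.

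Concretely, fix an arbitrary $\w\in\R^n$ and a reaction $\s\to\s'\in E$ with $\w\cdot(\s'-\s)<0$. Since $\s'-\s\in S$, this inequality forces $\w\notin S^\perp$, so the parallel sweep test applies to $\w$. Let $H$ be the hyperplane described in the proposition, namely the hyperplane orthogonal to $\w$ through a source vertex $\s_1$ minimizing $\w\cdot\hat\s$ over $\hat\s\in V_S$, and let $\s_0\to\s'_0$ be a reaction with $\s_0\in H$ and $(\s'_0-\s_0)\cdot\w>0$, whose existence is asserted by the hypothesis. I would then set $\tilde\s=\s_0$ and $\tilde\s'=\s'_0$ and verify the three inequalities required by the definition.

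The inequality $\w\cdot(\tilde\s'-\tilde\s)>0$ is the direct output of the test. The inequality $\w\cdot\tilde\s\leq\w\cdot\hat\s$ for every source $\hat\s\in V_S$ follows because $\s_0\in H$ forces $\w\cdot\s_0=\w\cdot\s_1$, and $\s_1$ was chosen to minimize $\w\cdot$ on $V_S$. The only inequality that requires a short argument is the strict bound $\w\cdot\tilde\s<\w\cdot\s$, and the key (essentially only) point here is to exclude the possibility that $\s\in H$: otherwise the first clause of the parallel sweep test applied to the reaction $\s\to\s'$ emanating from $H$ would give $\w\cdot(\s'-\s)\geq 0$, contradicting our standing assumption. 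Hence $\s$ is a source vertex off $H$, and the minimality of $\s_1$ yields $\w\cdot\tilde\s=\w\cdot\s_1<\w\cdot\s$.

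I do not anticipate any serious obstacle. The main conceptual step — and the reason the parallel sweep test is useful in practice — is the observation that any anti-$\w$ reaction must be sourced strictly above the $\w$-minimizing hyperplane of source vertices, which is precisely what allows the sustaining reaction produced by the test from within $H$ to serve as a witness simultaneously for \emph{every} anti-$\w$ reaction. Everything else is bookkeeping against the definition of strongly endotactic given in item (iv).
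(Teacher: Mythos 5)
The paper states this proposition without proof (it is quoted from the cited literature), so there is no in-paper argument to compare against; judged on its own terms, your write-up correctly establishes only one of the two implications being asserted. The forward direction --- that passing the test for every $\w \notin S^\perp$ implies strong endotacticity --- is handled cleanly and completely: the reduction of an arbitrary $\w$ admitting an anti-$\w$ reaction to a vector outside $S^\perp$, the choice of $\s_0 \to \s_0'$ as the witness, and in particular the key observation that $\s \notin H$ (whence $\w\cdot\s > \w\cdot\s_1 = \w\cdot\s_0$ by minimality of $\s_1$ over $V_S$) are all correct.

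The gap is that the proposition also asserts the converse: ``Else, it is not strongly endotactic,'' i.e., a strongly endotactic network passes the test for \emph{every} $\w \notin S^\perp$. You do not address this direction, and it is not decorative --- it is precisely what is invoked in the ($\Rightarrow$) half of Proposition~\ref{prop:endotactic_face_reaction}. To close it, suppose the test fails for some $\w \notin S^\perp$ with minimizing hyperplane $H$. If some reaction $\s\to\s'$ with $\s\in H$ has $\w\cdot(\s'-\s)<0$, that reaction itself has no witness: the definition forces $\w\cdot\tilde{\s} < \w\cdot\s = \min_{\hat{\s}\in V_S}\w\cdot\hat{\s}$, which is impossible. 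Otherwise every reaction sourced on $H$ has $\w$-dot exactly $0$; since the third condition in the definition forces any witness to be sourced on $H$, no anti-$\w$ reaction can have a witness, so if one exists we are done. If none exists, every reaction vector has nonnegative $\w$-dot, and at least one is strictly positive because $\w\notin S^\perp$; that reaction is an anti-$(-\w)$ reaction whose witness would need strictly negative $\w$-dot, which no reaction has. In every case strong endotacticity fails, completing the equivalence.
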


\begin{proposition}\label{prop:endotactic_face_reaction}
Let $\GG$ be a reaction network such that all the vertices of $\GG$ are contained in the convex hull of its source vertices. Then $\GG$ is strongly endotactic if and only if for every proper face of the convex hull of the source vertices, there exists a reaction of $\GG$ with source vertex on this face and target vertex that does not belong to this face.
\end{proposition}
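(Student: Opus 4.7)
The plan is to derive the equivalence from the parallel sweep test (Proposition~\ref{test:parallel_sweep}), using the hypothesis that every vertex of $\GG$ lies in $\text{conv}(V_S)$ to reduce one half of the test to a tautology.

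Fix $\w \not\in S^\perp$, let $\alpha = \min_{\hat{\s}\in V_S}\w \cdot \hat{\s}$, let $H = \{x : \w \cdot x = \alpha\}$, and set $F_\w := H \cap \text{conv}(V_S)$, which will be a proper face of $\text{conv}(V_S)$. I would first observe that the requirement ``$\w \cdot (\s' - \s) \geq 0$ for every reaction with $\s \in H$'' in the parallel sweep test is automatic under the hypothesis: if $\s \in H \cap V_S$ then $\w \cdot \s = \alpha$, and $\s' \in \text{conv}(V_S)$ forces $\w \cdot \s' \geq \alpha$. So the parallel sweep test reduces to: for every $\w \not\in S^\perp$, there exists a reaction $\s_0 \to \s_0'$ with $\s_0 \in H$ and $\w \cdot (\s_0' - \s_0) > 0$.

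The next step is to recast this surviving condition in terms of faces. Since every proper face $F$ of $\text{conv}(V_S)$ is exposed by some supporting hyperplane, $F = F_\w$ for some nonzero $\w$; under the embedding hypothesis one verifies such a $\w$ can be chosen outside $S^\perp$, yielding a surjection $\w \mapsto F_\w$ onto the set of proper faces. For a reaction $\s_0 \to \s_0'$, the condition $\s_0 \in H$ is exactly $\s_0 \in F_\w \cap V_S$, and since $\s_0' \in \text{conv}(V_S)$ satisfies $\w \cdot \s_0' \geq \alpha$, the strict inequality $\w \cdot (\s_0' - \s_0) > 0$ is equivalent to $\s_0' \notin F_\w$. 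Hence the surviving sweep condition for $\w$ becomes precisely ``there is a reaction with source on $F_\w$ and target not on $F_\w$.'' Quantifying over all $\w \not\in S^\perp$ on the one side and over all proper faces $F$ on the other yields both directions of the biconditional.

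The main obstacle I anticipate is the bookkeeping around the map $\w \mapsto F_\w$: one needs to verify that every proper face of $\text{conv}(V_S)$ arises from some $\w \not\in S^\perp$, ruling out any ``phantom'' proper face exposed only by vectors in $S^\perp$. This is where the embedding hypothesis does its subtle work, by constraining the geometry of $V_S$ relative to the stoichiometric subspace $S$. Once this correspondence is established cleanly, the two sides of the equivalence are essentially tautologically related through the parallel sweep reformulation.
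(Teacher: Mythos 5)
Your proposal is correct and follows essentially the same route as the paper's own proof: both reduce the statement to the parallel sweep test of Proposition~\ref{test:parallel_sweep}, use the hypothesis that all targets lie in $\mathrm{conv}(V_S)$ to make the weak inequality $\w\cdot(\s'-\s)\geq 0$ automatic, and identify the strict inequality with the target leaving the face $H\cap\mathrm{conv}(V_S)$. The one point you flag as an obstacle --- verifying that every proper face is exposed by some $\w\notin S^\perp$ --- is a genuine subtlety, and it is one the paper's own proof passes over in silence.
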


\begin{proof}
Let $S$ denote the stoichiometric subspace. Let $\w\in\mathbb{R}^n$ be such that $\w \not\in S^\perp$. Let $H $ be the hyperplane perpendicular to $\w$ that contains a source vertex $\s_1$ such that for every other source vertex $\s_2$, we have $(\s_2-\s_1)\cdot \w \geq 0$. Therefore, the intersection of $H$ with the convex hull of source vertices of $\GG$ is a proper face of the convex hull. Let us call this face $f$. 

($\Rightarrow$) First assume that $\GG$ is strongly endotactic, so it passes the parallel sweep test given in Proposition~\ref{test:parallel_sweep}.
In particular, for every reaction $\s \to \s'$ whose source vertex lies on $f$, we have $(\s'-\s)\cdot \w \geq 0$, and there exists a reaction $\s_0 \to \s'_0$ with $\s_0 \in f$ such that $(\s'_0-\s_0)\cdot \w > 0$. Since the vertices of $\GG$ are contained in the convex hull of its source vertices, the reaction $\s_0 \to \s'_0$ lies in the convex hull; the source vertex $\s_0$ lies on $f$ and the target vertex does not belong to $f$.

($\Leftarrow$) Let us assume that there exists a reaction $\s_0 \to \s'_0$ lying in the convex hull such that the source vertex lies on $f$ and the target vertex does not belong to $f$. In particular, this implies that $(\s'_0-\s_0)\cdot \w > 0$. We will show that $\GG$ is strongly endotactic by showing that $\GG$ passes the parallel sweep test given in~\ref{test:parallel_sweep}. Since all the vertices of $\GG$ are contained in the convex hull of its source vertices, for every reaction $\s \to \s'$ whose source vertex lies on $f$, we have $(\s'-\s)\cdot \w \geq 0$. This combined with the fact that $(\s'_0-\s_0)\cdot \w > 0$ shows that $\GG$ passes the parallel sweep test and is hence strongly endotactic.

\end{proof}

\begin{corollary}\label{prop:not_endotactic} 
Let $\GG$ be a reaction network such that all the vertices of $\GG$ are contained in the convex hull of its source vertices. If $\GG$ is not strongly endotactic, then there exists a proper face of the convex hull of the source vertices of $\GG$ such that every reaction with source on this face has target on this face.  
\end{corollary}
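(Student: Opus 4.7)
The plan is to obtain this corollary as an immediate contrapositive of Proposition~\ref{prop:endotactic_face_reaction}. That proposition asserts the equivalence: $\GG$ is strongly endotactic if and only if for every proper face of the convex hull of the source vertices, some reaction has its source on that face and its target off that face. The hypothesis on $\GG$ (that every vertex lies in the convex hull of the source vertices) is identical in the two statements, so the proposition applies directly.

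First I would negate the right-hand side of the biconditional. The statement ``for every proper face $f$, there exists a reaction with source on $f$ and target not on $f$'' negates to ``there exists a proper face $f$ such that every reaction with source on $f$ has its target on $f$ as well.'' Then, since we are assuming $\GG$ is \emph{not} strongly endotactic, Proposition~\ref{prop:endotactic_face_reaction} forces this negated condition to hold. That is precisely the conclusion claimed by the corollary.

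There is no real obstacle here, since the argument is purely logical: the corollary is the contrapositive of the $(\Leftarrow)$ direction of the preceding proposition. The only thing worth verifying along the way is that the quantifier structure ``for every face, there exists a reaction leaving it'' genuinely negates to ``there exists a face such that no reaction leaves it''; this is routine. Consequently the proof should be a single short paragraph invoking Proposition~\ref{prop:endotactic_face_reaction} and taking the contrapositive, with no new geometric input required.
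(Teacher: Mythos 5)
Your proposal is correct and matches the paper exactly: the paper's own proof is a one-line appeal to Proposition~\ref{prop:endotactic_face_reaction}, and your argument simply makes the contrapositive and the negation of the quantifiers explicit. Nothing further is needed.
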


\begin{proof}
This follows from Proposition~\ref{prop:endotactic_face_reaction}.
\end{proof}

If we assume that the vertices of $\GG$ have non-negative integer components, then under  \emph{mass-action kinetics}~\cite{craciun2009toric,craciun2013persistence,craciun2019endotactic}, $\GG$ generates a dynamical system on $\mathbb{R}^n_{\geq 0}$ which can be expressed as
\begin{eqnarray}\label{eq:mass_action}
\frac{d\x}{dt} = \displaystyle\sum_{\s\to\s'\in E} k_{\s\to\s'}\x^{\s}(\s'-\s),
\end{eqnarray}
where $\x^{\s}={x_1}^{s_1}{x_2}^{s_2}\cdots{x_n}^{s_n}$ and $k_{\s\to \s'}>0$ is the rate constant corresponding to the reaction $\s\to \s'$. We will denote the dynamical system generated by mass-action kinetics as $\GG_{\bk}=(V,E,{\bk})$, where ${\bk} =  (k_{\s\to\s'})_{\s\to\s'\in E}$.

\begin{remark}\label{rem:test_mass-action}
A polynomial dynamical system consisting of equations of the form $\frac{dx_i}{dt}=f_i(\x)$, where $i=1,2,...,n$ and $\x=(x_1,x_2,...,x_n)^T$ is given by mass-action kinetics if and only if $x_i$ divides every negative monomial in $f_i(\x)$ for all $i\in \{1,2,...,n\}$~\cite{feinberg2019foundations,hars1981inverse}.
\end{remark}
%


\begin{definition}\label{defn:dynamical_equivalence}
Two mass-action systems $\GG_{\bk}=(V,E,{\bk})$ and $\tilde{G}_{\tilde{\bk}}=(\tilde{V},\tilde{E},\tilde{\bk})$ are \emph{dynamically equivalent} if they generate the same dynamical system~(\ref{eq:mass_action}), i.e.,
\begin{eqnarray}
\displaystyle\sum_{\s\to\s'\in E} k_{\s\to\s'}\x^{\s}(\s'-\s) = \displaystyle\sum_{\tilde{\s}\to\tilde{\s}'\in \tilde{E}} \tilde{k}_{\tilde{\s}\to\tilde{\s}'}\x^{\tilde{\s}}(\tilde{\s}'-\tilde{\s}). 
\end{eqnarray} 
%
Equivalently~\cite{craciun2020efficient}, two mass-action systems are dynamically equivalent if for every $\s_0 \in V \cup \tilde{V}$, 
\begin{eqnarray}
\displaystyle\sum_{\s_0\to\s'\in E} k_{\s_0\to\s'}(\s'-\s_0) = \displaystyle\sum_{\s_0\to\tilde{\s}'\in \tilde{E}} \tilde{k}_{\s_0\to\tilde{\s}'}(\tilde{\s}'-\s_0). 
\end{eqnarray}
\end{definition}

A dynamical system is called \emph{autonomous} if it can be written in the form $\frac{d\x(t)}{dt}=\f(\x(t))$. Generically, the rate coefficients in~(\ref{eq:mass_action}) can depend on time, giving rise to a non-autonomous system 
\begin{eqnarray}\label{eq:non_autonomous}
\frac{d\x}{dt} = \displaystyle\sum_{\s\to\s'\in E} k_{\s\to\s'}(t)\x^{\s}(\s'-\s).
\end{eqnarray}
If there exists an $\epsilon>0$ such that $\epsilon\leq k_{\s\to\s'}(t)\leq \frac{1}{\epsilon}$ for every rate constant $k_{\s\to\s'}(t)$ in~(\ref{eq:non_autonomous}), then the dynamical system is called a \emph{variable-$k$ mass-action system}. Note that every autonomous mass-action system is a variable-$k$ mass-action system. We now define some important dynamical properties.

\begin{definition}\label{defn:persistence}
%
A dynamical system given by~(\ref{eq:non_autonomous}) is said to be \emph{persistent} if a solution $\x(t)$ of~(\ref{eq:non_autonomous}) with any initial condition $\x(0)\in\mathbb{R}^n_{>0}$ satisfies $\displaystyle\liminf_{t\rightarrow T}\x_i(t)>0$ for all $i=1,2,..,n$, where $T\in(0,\infty]$ is the maximum time for which $\x(t)$ is defined. 
\end{definition}

\begin{definition}\label{defn:permanence}
Let $C$ be a positive compatibility class. A dynamical system given by~(\ref{eq:non_autonomous}) is said to be \emph{permanent}  on $C$ if there exists a compact set $K\subseteq C$ such that for every solution $\x(t)$ of~(\ref{eq:non_autonomous}) with initial condition $\x(0)\in C$, we have $\x(t)\in K$ for all $t$ sufficiently large. A dynamical system given by~(\ref{eq:non_autonomous}) is said to be permanent if it is permanent on any positive compatibility class. 
\end{definition}
It is easy to see that a permanent system is persistent. The above dynamical properties are related to important open problems in reaction network theory. The \emph{Persistence Conjecture} states that weakly reversible mass-action   systems are persistent. This conjecture has been generalized in~\cite{craciun2013persistence} to the \emph{Extended Permanence Conjecture}, which states that variable-$k$ endotactic mass-action  systems are permanent. These conjectures are intimately related to the more familiar \emph{Global Attractor Conjecture}~\cite{craciun2009toric}, which says that there is exactly one globally attracting steady state in every positive compatibility class for \emph{complex balanced  systems}~\cite{horn1972general}. 

\section{Autocatalytic systems}\label{sec:autocatalysis}


One of the goals of this paper is to make a connection between a certain type of networks called \emph{autocatalytic}~\cite{kauffman1986autocatalytic,hordijk2004detecting,hordijk2011required}, and dynamical properties like persistence and permanence using the framework of reaction network theory. Autocatalytic networks have been studied in the context of the origin of life~\cite{kauffman1996home,kauffman1993origins,hordijk2010autocatalytic}. An important class of autocatalytic networks called \emph{hypercycles}, which consist of a cyclic connection of molecules capable of self-replicating themselves by undergoing mutual catalysis, were believed to be responsible for the origin of life. {\em The classical n-dimensional hypercycle refers to the following network: $X_i + X_{i+1}\rightarrow X_i + 2X_{i+1}$ for $1\leq i\leq n$, where $X_{n+1}=X_1$(in the cyclic sense).} The dynamics of hypercycles has been a topic of interest since the $1980$'s where properties like permanence and the existence of a globally attracting fixed point were established for special cases 
of the hypercycle~\cite{schuster1979dynamical,hofbauer1988theory,eigen2012hypercycle,hofbauer1991stable}.  

The concentration of species in autocatalytic systems can become unbounded in finite time. One approach for solving the dynamical equations corresponding to autocatalytic systems is by the addition of a \emph{dilution flux}, which keeps the total concentration of species constant. It turns out that addition of a dilution flux is equivalent to analyzing the dynamics of relative populations of species in such networks. 
In general, it is \emph{not} true that the relative populations of species in an autonomous dynamical system are themselves solutions of an autonomous dynamical system. However, under certain assumptions on the original system, we show that the model of relative populations are solutions of an autonomous polynomial dynamical system up to time re-scaling. Theorem~\ref{thm:dynamics_relative_concentrations} illustrates this point. Further, we use this to show that the reaction networks corresponding to the relative populations of bimolecular autocatalytic systems have a certain form in Theorem~\ref{thm:relative_autocatalysis}. 

\begin{theorem}\label{thm:dynamics_relative_concentrations}
Consider an autonomous dynamical system $\mathcal{A}$ given by $\frac{d\x}{dt}=\f(\x)$, where $\f(\x)=(f_1(\x),f_2(\x),...,f_n(\x))$, where each $f_i(\x)$ is a homogeneous polynomial of degree $d$. Let $x_i(t)$ denote the solution corresponding to the $i^{th}$ component. Let $x_T(t)=\sum_{i=1}^n x_i(t)$ denote the total concentration. Then there exists an autonomous polynomial dynamical system $\tilde{\mathcal{A}}$ such that for any solution $\x(t)$ of $\mathcal{A}$, the function
    \[\tilde{\x}(t)=\frac{\x(t)}{x_T(t)}\]
defined for all time $t$ such that $0 < x_T(t) <\infty$, is, up to time-rescaling, a solution of $\tilde{\mathcal{A}}$.
Moreover, if $\mathcal{A}$ is a mass-action system, then $\tilde{\mathcal{A}}$ is also a mass-action system. In addition, if $\mathcal{A}$ is a variable-$k$ mass-action system, then $\tilde{\mathcal{A}}$ is also a variable-$k$ mass-action system.
\end{theorem}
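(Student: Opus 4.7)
The plan is to differentiate $\tilde{\x}=\x/x_T$ by the quotient rule, use the degree-$d$ homogeneity of $\f$ to pull out a common factor of $x_T^{d-1}$, and then absorb that factor by a single time reparametrization. Explicitly, from $\tilde{x}_i = x_i/x_T$ we immediately get
\[
\frac{d\tilde{x}_i}{dt} \;=\; \frac{f_i(\x)}{x_T} \;-\; \tilde{x}_i\,\frac{\sum_{j=1}^n f_j(\x)}{x_T}.
\]
Homogeneity gives $f_j(\x) = x_T^d f_j(\tilde{\x})$, so both terms share a factor $x_T^{d-1}$. Setting $\tau(t) = \int_0^t x_T(s)^{d-1}\,ds$, which is a smooth strictly increasing reparametrization wherever $0 < x_T(t) < \infty$, the function $\tilde{\x}$ viewed as a function of $\tau$ satisfies
\[
\frac{d\tilde{x}_i}{d\tau} \;=\; f_i(\tilde{\x}) \;-\; \tilde{x}_i \sum_{j=1}^n f_j(\tilde{\x}),
\]
which I take to be $\tilde{\mathcal{A}}$. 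This is an autonomous polynomial system of degree $d+1$, and as a sanity check $\sum_i d\tilde{x}_i/d\tau$ vanishes on the probability simplex $\{\sum_i \tilde{x}_i = 1\}$, so the simplex is invariant, as one should expect for a system governing relative populations.

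For the mass-action claim I apply Remark~\ref{rem:test_mass-action}: it suffices to check that $\tilde{x}_i$ divides every negative monomial appearing in the right-hand side of the $i$-th equation. Such negative monomials split into two types. First, any negative monomial inherited from $f_i(\tilde{\x})$ is divisible by $\tilde{x}_i$ because $\mathcal{A}$ is already mass-action. Second, every monomial in $-\tilde{x}_i \sum_j f_j(\tilde{\x})$ carries an explicit factor $\tilde{x}_i$. Hence $\tilde{\mathcal{A}}$ admits a mass-action realization.

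The variable-$k$ extension is essentially the same computation done pointwise in $t$. If $\mathcal{A}$ has the form $\dot{\x} = \sum_{\s\to\s'} k_{\s\to\s'}(t)\,\x^{\s}(\s'-\s)$ with every source complex satisfying $|\s|=d$, the derivation above yields a polynomial system for $\tilde{\x}(\tau)$ whose ``original'' reactions retain the rate constants $k_{\s\to\s'}(t(\tau))$, while the new ``dilution'' reactions (from the $-\tilde{\x}\sum_j f_j$ term) inherit rate constants of the form $|\sum_j(s'_j-s_j)|\cdot k_{\s\to\s'}(t(\tau))$; since the combinatorial prefactors are fixed integers and the original rate functions satisfy uniform $\epsilon$-bounds, the new rate functions are uniformly bounded as well. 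The step that requires the most care—and the only real conceptual obstacle—is recognizing that $x_T^{d-1}$ factors out \emph{uniformly} across all $n$ coordinates, so that a single global time change $\tau(t)$ serves to make the relative-concentration dynamics autonomous; once that observation is in hand, the rest of the proof is bookkeeping, and in particular the reparametrization does not affect the uniform bounds on the rate functions because the bounds are uniform in $t$ to begin with.
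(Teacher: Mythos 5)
Your proposal is correct and follows essentially the same route as the paper: quotient rule, factoring out $x_T^{d-1}$ via homogeneity, a single global time change, and the divisibility criterion of Remark~\ref{rem:test_mass-action} for the mass-action claim. Your variable-$k$ argument, with the explicit reparametrization $\tau(t)=\int x_T(s)^{d-1}\,ds$ and the observation that composing the rate functions with $t(\tau)$ preserves the uniform $\epsilon$-bounds, is a cleaner packaging of the paper's augmented-autonomous-system construction, which encodes exactly the same idea.
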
  

\begin{proof}
The proof proceeds by construction of $\tilde A$. Let $\tilde{\x} = {\x} / x_T$, then 
\begin{eqnarray}\label{eq:dynamical_system_relative}
\frac{d\tilde{\x}}{dt} ={x_T^{-2}}\left(\frac{d\x}{dt}x_T  - \x\displaystyle\sum_{i=1}^n \frac{dx_i}{dt}\right) =  {x_T^{-2}}\left(\f(\x)x_T  - \x\displaystyle\sum_{i=1}^n f_i(\x)\right).
\end{eqnarray}
Since $f_i(\x)$ is a homogeneous polynomial of degree $d$ and $\x=x_T\tilde{\x}$, we have $\f(\x)=x_T^d \f(\tilde{\x})$. Plugging this into Equation (\ref{eq:dynamical_system_relative}) gives
\begin{eqnarray}\label{eq:without_rescaling_system_relative}
\frac{d\tilde{\x}}{dt}=\left(\f(\tilde{\x})-\tilde{\x}\displaystyle\sum_{i=1}^n f_i(\tilde{\x})\right)x_T^{d-1},
\end{eqnarray}
which is a time-rescaled version of 
\begin{eqnarray}\label{eq:after_rescaling_system_relative}
\frac{d\tilde{\x}}{dt}= \f(\tilde{\x})-\tilde{\x}\displaystyle\sum_{i=1}^n f_i(\tilde{\x}).
\end{eqnarray}
Equations~(\ref{eq:without_rescaling_system_relative}) and (\ref{eq:after_rescaling_system_relative}) have the same set of trajectories and hence $\tilde{\x}$ is (after time-rescaling) a solution of $\tilde{\mathcal{A}}$ given by $\frac{d\tilde{\x}}{dt}=\tilde{\f}(\tilde{\x})$, where $\tilde{\f}(\tilde{\x})=\f(\tilde{\x})-\tilde{\x}\displaystyle\sum_{i=1}^n f_i(\tilde{\x})$.\\

If the system $\mathcal{A}$ was mass-action, by Remark~\ref{rem:test_mass-action} it follows that $\tilde{\mathcal{A}}$ is also mass-action. \\


Now consider the case when the system $\mathcal{A}$ is a variable-$k$ mass-action system, so that the right-hand side $\f$ is a system of homogeneous polynomials of degree $d$ with time-dependent coefficients, say
\begin{align*} 
\f(\x,t) =  \sum_{\s\to\s'\in E} k_{\s\to\s'}(t) \x^{\s} (\s' - \s). 
\end{align*}

%
\noindent In this case, the analogues of Equations~(\ref{eq:without_rescaling_system_relative}) and~(\ref{eq:after_rescaling_system_relative}) are
\begin{eqnarray}\label{eq:variable_without_rescaling_system_relative}
\frac{d\tilde{\x}}{dt}=\left(\f(\tilde{\x},t)-\tilde{\x}\displaystyle\sum_{i=1}^n f_i(\tilde{\x},t)\right)x_T^{d-1},
\end{eqnarray}
and
\begin{eqnarray}\label{eq:variable_after_rescaling_system_relative}
\frac{d\tilde{\x}}{dt}= \f(\tilde{\x},t)-\tilde{\x}\displaystyle\sum_{i=1}^n f_i(\tilde{\x},t)
\end{eqnarray}
respectively, where $x_T = \sum_i x_i$ and $x_i$ a solution of $\mathcal{A}$. 
Construct an augmented dynamical system to Equation~\ref{eq:variable_without_rescaling_system_relative} given by
\begin{eqnarray}\label{eq:augmented_system}
\begin{split}
\frac{d\tilde{\x}}{dt} &=\left(\f(\tilde{\x},t)-\tilde{\x}\displaystyle\sum_{i=1}^n f_i(\tilde{\x},t)\right)x_T^{d-1}(t) \\
\frac{d\tilde{y}}{dt} &= 1.       
\end{split}
\end{eqnarray}
If we assume that the initial condition satisfies $y(t_0) = t_0$ then Equation~\ref{eq:augmented_system} can also be expressed as a autonomous dynamical system in the following way
\begin{eqnarray}\label{eq:autonomous_augmented_system}
\begin{split}
\frac{d\tilde{\x}}{dt} &=\left(\f(\tilde{\x},\tilde{y})-\tilde{\x}\displaystyle\sum_{i=1}^n f_i(\tilde{\x},\tilde{y})\right)x_T^{d-1}(\tilde y) \\
\frac{d\tilde{y}}{dt} &= 1.       
\end{split}
\end{eqnarray}
\noindent For initial condition $(\tilde \x(t_0),\tilde y(t_0)) := (\tilde \x_0,\tilde y_0=\tilde t_0) \in  \mathbb{R}_{>0}^n\times \mathbb{R}_{>0}$, let $(\tilde{\x}_0(t),\tilde{y}_0(t))$ be the unique solution to~(\ref{eq:autonomous_augmented_system}). Let $\Gamma$ denote the trajectory curve corresponding to this solution. Consider the following system
\begin{align}\label{eq:new_system}
\begin{split}
\frac{d\tilde \x^*}{dt} &= \f(\tilde\x^*, \tilde y^*)-\tilde\x^*\sum_{i=1}^n f_i(\tilde\x^*, \tilde y^*) \\
\frac{d\tilde y^*}{dt} &= \frac{1}{x_T^{d-1}(\tilde y^*)}.
\end{split}
\end{align}
Note that the solution of system~(\ref{eq:new_system}) with the same initial conditions $(\tilde \x_0,\tilde y_0=\tilde t_0)$ also lies on $\Gamma$. If we now define $\tilde k(t) = k(\tilde y^*(t))$ then the solution of the dynamical system given by Equation~\ref{eq:new_system} restricted to the components of $\tilde\x^*$, with initial condition $(\tilde \x_0,\tilde y_0=\tilde t_0)$, is the same as the solution of the equation (\ref{eq:variable_after_rescaling_system_relative}) where we {\em replace} $\f(\x,t)$ by 
\begin{align*} 
\tilde\f(\x,t) = \sum_{\s\to\s'\in E} \tilde k_{\s\to\s'}(t) \x^{\s} (\s' - \s).
\end{align*}

\end{proof}

\begin{remark}
Using the fact $\displaystyle\sum_{i=1}^n \tilde{x}_i=1$, the dynamical system~(\ref{eq:after_rescaling_system_relative}) can also be written as
\begin{eqnarray}\label{eq:homogeneous_relative_concentrations}
\frac{d\tilde{\x}}{dt}= \f(\tilde{\x})\left(\displaystyle\sum_{i=1}^n \tilde{x}_i\right) - \tilde{\x}\displaystyle\sum_{i=1}^n f_i(\tilde{\x}),
\end{eqnarray}
whose right-hand side consists of homogeneous polynomials.
\end{remark}

From here on, we will consider dynamical systems governed by mass-action kinetics.

\begin{definition}\label{def:bimolecular_autocatalytic_systems}
A reaction network $\GG$ is said to be a {\em bimolecular autocatalytic system} if every reaction in $\GG$ is of the form $X_i + X_j \xrightarrow[]{} X_i + X_j + X_l$ where $i,j,l\in\{1,2,...,n\}$.
\end{definition}

The dynamics (after time-rescaling) generated by the relative population variables of a bimolecular autocatalytic system is generated by another reaction network under mass-action kinetics. For example, consider the reaction network $\GG=\{X_1 + X_2 \rightarrow X_1 + X_2 + X_3\}$. The network corresponding to the relative concentrations of $\GG$ is given by $\tilde{\GG}=\{2X_1 + X_2 \rightarrow X_1 + X_2 + X_3, X_1 + 2X_2 \rightarrow X_1 + X_2 + X_3\}$. The next theorem makes this precise.

\begin{theorem}\label{thm:relative_autocatalysis}
Consider a bimolecular autocatalytic system $\mathcal{M}$ with concentration variables \\ $x_1,x_2,...,x_n$ corresponding to species $X_1,X_2,...,X_n$. Let $\tilde{x_i}(t) = x_i(t)/x_T(t)$ where $1\leq i\leq n$ denote the relative population variables and $x_T(t)$ the total concentration. Then $\tilde{x}_1(t),\tilde{x}_2(t),...,\tilde{x}_n(t)$ are (after time-rescaling) solutions of a dynamical system generated by mass-action kinetics, consisting of reactions of the form $X_ p + X_i + X_j \xrightarrow[]{\kappa} X_i + X_j +  X_l$ for each reaction $X_i + X_j \xrightarrow[]{\kappa} X_i + X_j + X_l$ in the bimolecular autocatalytic system $\mathcal{M}$, where $i$, $j$, $l$, $p\in\{1,2,...,n\}$ such that $p\neq l$. 
\end{theorem}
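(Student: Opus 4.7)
The plan is to combine Theorem~\ref{thm:dynamics_relative_concentrations} with a direct identification of the network whose mass-action dynamics matches the right-hand side of Equation~(\ref{eq:after_rescaling_system_relative}). Since every reaction in $\mathcal{M}$ has the form $X_i + X_j \xrightarrow{\kappa} X_i + X_j + X_l$, it contributes exactly $\kappa x_i x_j$ to $\dot{x}_l$ and nothing else, so each $f_i(\x)$ is a homogeneous polynomial of degree $d=2$. Thus Theorem~\ref{thm:dynamics_relative_concentrations} applies and yields that $\tilde{\x}(t)$ is, up to time-rescaling, a solution of $\frac{d\tilde{\x}}{dt} = \tilde{\f}(\tilde{\x})$ with $\tilde{\f}(\tilde{\x}) = \f(\tilde{\x}) - \tilde{\x} \sum_{i=1}^n f_i(\tilde{\x})$.

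Next I would compute the total production rate in a convenient closed form: since every reaction in $\mathcal{M}$ contributes only positively (to $\dot{x}_l$) and never negatively, we get
\[\sum_{i=1}^n f_i(\x) = \sum_{X_i + X_j \xrightarrow{\kappa} X_i + X_j + X_l \in \mathcal{M}} \kappa\, x_i x_j.\]
Then I would introduce the candidate network $\tilde{\mathcal{M}}$ whose reactions are exactly those listed in the theorem, namely $X_p + X_i + X_j \xrightarrow{\kappa} X_i + X_j + X_l$ for each reaction of $\mathcal{M}$ and each $p \in \{1,\ldots,n\}$ with $p \neq l$. Under mass-action kinetics, such a reaction contributes $\kappa\, \tilde{x}_p \tilde{x}_i \tilde{x}_j$ to $\dot{\tilde{x}}_l$ and $-\kappa\, \tilde{x}_p \tilde{x}_i \tilde{x}_j$ to $\dot{\tilde{x}}_p$.

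Finally, I would verify that the mass-action dynamics of $\tilde{\mathcal{M}}$ coincides componentwise with $\tilde{\f}(\tilde{\x})$. For component $l$, summing the contributions of the new reactions over $p \neq l$ and using the identity $\sum_{p \neq l} \tilde{x}_p = 1 - \tilde{x}_l$ (valid because $\sum_p \tilde{x}_p = 1$) gives $\kappa\, \tilde{x}_i \tilde{x}_j (1 - \tilde{x}_l)$, which is exactly the contribution of the original reaction to $f_l(\tilde{\x}) - \tilde{x}_l \sum_i f_i(\tilde{\x})$. For component $p \neq l$, the new reactions contribute $-\kappa\, \tilde{x}_p \tilde{x}_i \tilde{x}_j$, matching the $-\tilde{x}_p \sum_i f_i(\tilde{\x})$ part since the original reaction contributes nothing to $f_p$. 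Summing over all reactions of $\mathcal{M}$ closes the identification.

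The step I expect to require the most care is the bookkeeping in the last paragraph, particularly handling the corner cases $i = j$, $p = i$, or $p = j$, where the stoichiometric coefficients collapse and one must confirm that mass-action rates (which use multiplicities in the exponents) still produce the correct monomials. These cases turn out to go through automatically because the rate monomial $\kappa \tilde{x}_p \tilde{x}_i \tilde{x}_j$ is computed from the source complex with appropriate exponents regardless of coincidences, but this should be stated explicitly. One could also shortcut the direct verification by invoking the "mass-action is preserved" clause of Theorem~\ref{thm:dynamics_relative_concentrations} together with Remark~\ref{rem:test_mass-action} to know a priori that some mass-action realization exists, and then use the explicit formula above to read off the reactions.
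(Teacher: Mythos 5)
Your proposal is correct and follows essentially the same route as the paper: apply Theorem~\ref{thm:dynamics_relative_concentrations}, use $\sum_p \tilde{x}_p = 1$ to homogenize, and verify reaction by reaction that the listed network generates the resulting vector field under mass-action kinetics. The only difference is stylistic: you treat the index coincidences uniformly by noting that the rate monomial $\kappa\,\tilde{x}_p\tilde{x}_i\tilde{x}_j$ and the net reaction vector $X_l - X_p$ are unaffected by coincidences among $i,j,p,l$, whereas the paper runs an explicit four-case analysis according to whether $l$ equals $i$ or $j$.
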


\begin{proof}
We will consider a single reaction in the bimolecular autocatalytic system $\mathcal{M}$. By combining the right-hand sides of the other reactions, we will get our desired result. Consider the reaction $X_i + X_j \xrightarrow[]{\kappa} X_i + X_j + X_l$ where $i$, $j$, $l\in\{1,2,...,n\}$. We have the following differential equations corresponding to the species $X_i$, $X_j$, $X_l$, and every species $X_m$ that does not appear in the above reaction:
\begin{eqnarray}\label{eq:absolute_concentration}
\begin{split}
\frac{dx_i}{dt}  &= \kappa x_ix_j\delta_{li}\\
\frac{dx_j}{dt}  &= \kappa x_ix_j\delta_{jl} \\
\frac{dx_l}{dt}  &= \kappa x_ix_j \\
\frac{dx_m}{dt}  &= 0.
\end{split}
\end{eqnarray}
Some of the equations in~(\ref{eq:absolute_concentration}) may be redundant, e.g., when $i=l$ or $j=l$. According to Equation~(\ref{eq:homogeneous_relative_concentrations}), the dynamical system for the relative populations variables is 
\begin{eqnarray}\label{eq:relative_concetration_specific}
\begin{split}
\frac{d\tilde{x}_i}{dt}  & =  \kappa\tilde{x}_i\tilde{x}_j\delta_{li}\left(\displaystyle\sum_{r=1}^n \tilde{x}_r\right)-\kappa\tilde{x}^2_i\tilde{x}_j\\
\frac{d\tilde{x}_j}{dt}  &= \kappa\tilde{x}_i\tilde{x}_j\delta_{jl}\left(\displaystyle\sum_{r=1}^n \tilde{x}_r\right)  - \kappa\tilde{x}_i\tilde{x}^2_j\\
\frac{d\tilde{x}_l}{dt} & = \kappa\tilde{x}_i\tilde{x}_j\left(\displaystyle\sum_{r=1}^n \tilde{x}_r\right) - \kappa\tilde{x}_i\tilde{x}_j\tilde{x}_l\\
\frac{d\tilde{x}_m}{dt} & = - \kappa\tilde{x}_i\tilde{x}_j\tilde{x}_m.
\end{split}
\end{eqnarray}
We now show that dynamical system (\ref{eq:relative_concetration_specific}) can be generated by the set of reactions
\begin{eqnarray}\label{eq:imp_reaction}
\{X_ p + X_i + X_j \xrightarrow[]{\kappa} X_i + X_j + X_l \mid p\neq l\}.
\end{eqnarray}
There are four cases to consider:
\begin{enumerate}
\item $i=j=l$: The set of reactions of the form~(\ref{eq:imp_reaction}) contribute $\kappa\tilde{x}^2_i\left(\displaystyle\sum_{\substack{1\leq r\leq n \\ r\neq i}} \tilde{x}_r\right)$ to $\frac{d\tilde{x}_i}{dt}$ and $-\kappa\tilde{x}_i\tilde{x}_j\tilde{x}_m$ to $\frac{d\tilde{x}_m}{dt}$. 
\item $i=l$, $j\neq l$: The set of reactions of the form~(\ref{eq:imp_reaction}) contribute $\kappa\tilde{x}_i\tilde{x}_j\left(\displaystyle\sum_{\substack{1\leq r\leq n \\ r\neq i}} \tilde{x}_r\right)$ to $\frac{d\tilde{x}_i}{dt}$, $-\kappa\tilde{x}_i\tilde{x}^2_j$ to $\frac{d\tilde{x}_j}{dt}$, and $- \kappa\tilde{x}_i\tilde{x}_j\tilde{x}_m$ to $\frac{d\tilde{x}_m}{dt}$.
\item $i \neq l$, $j=l$: Similar to the case above, the set of reactions of the form~(\ref{eq:imp_reaction}) contribute $-\kappa\tilde{x}^2_i\tilde{x}_j$ to $\frac{d\tilde{x}_i}{dt}$, $\kappa\tilde{x}_i\tilde{x}_j\left(\displaystyle\sum_{\substack{1\leq r\leq n \\ r\neq j}} \tilde{x}_r\right)$ to $\frac{d\tilde{x}_j}{dt}$, and $- \kappa\tilde{x}_i\tilde{x}_j\tilde{x}_m$ to $\frac{d\tilde{x}_m}{dt}$.
\item $l\neq i$, $j\neq l$: The set of reactions of the form~(\ref{eq:imp_reaction}) contribute $-\kappa\tilde{x}^2_i\tilde{x}_j$ to $\frac{d\tilde{x}_i}{dt}$, $-\kappa\tilde{x}_i\tilde{x}^2_j$ to $\frac{d\tilde{x}_j}{dt}$, and $- \kappa\tilde{x}_i\tilde{x}_j\tilde{x}_m$ to $\frac{d\tilde{x}_m}{dt}$.
\end{enumerate}
Therefore, the mass-action system generated by~(\ref{eq:imp_reaction}) coincides with~(\ref{eq:relative_concetration_specific}).
\end{proof}

\section{Permanence and global stability of autocatalytic recombination networks}\label{sec:examples}

In what follows, we show that there exist very general bimolecular systems called autocatalytic recombination networks for which we can use Theorem~\ref{thm:relative_autocatalysis} to prove permanence of the dynamical systems generated by the relative populations in these networks. 

\emph{Genetic recombination} is a phenomenon that is widely believed to be responsible for variation among species~\cite{rieger2012glossary,alberts2002molecular}. It involves exchange of genetic material between molecules of DNA to produce a new molecule, which inherits certain properties of its parent molecules. Most familiar examples of recombination take place during prophase I of meiosis. In particular, for a single crossover recombination, two DNA sequences of equal length exchange genetic material to give a third sequence of the same length which has the \emph{prefix} of one of the sequences and the \emph{suffix} of the other sequence. Figure~\ref{fig:recombination} illustrates this point.

\begin{figure*}[h!]
\centering
\includegraphics[scale=0.45]{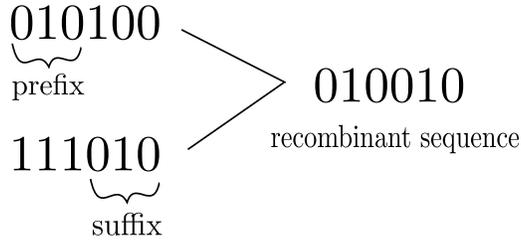}
\caption{Recombinant sequence produced from the prefix of one sequence and the suffix of another.}\label{fig:recombination}
\end{figure*} 
For our purpose, we analyze recombination reaction networks in the context of autocatalysis. The dynamical systems that we analyze bear striking resemblance to the fertility equations in~\cite{hofbauer1998evolutionary}. One can also view them as cyclic versions of the \emph{catalytic network equation} in~\cite{stadler1993random}. In particular, we analyze bimolecular autocatalytic networks (see Definition~\ref{def:bimolecular_autocatalytic_systems}), whose reactions are of the type $X_i + X_j \rightarrow X_i + X_j + X_k$. In this case, $X_k$ is the species formed by an autocatalytic reaction involving the exchange of genetic information between $X_i$ and $X_j$. In what follows, we shall show that the dynamical systems generated by relative populations of some of these reaction networks are permanent. 

In the next few examples, we consider autocatalytic recombinant networks involving the same reactant species. This is akin to \emph{homologous} genetic recombination~\cite{sung2006mechanism,vasquez2001manipulating} which involves the exchange of genetic material in the form of nucleotide sequences between two similar DNA strands. Homologous recombination plays an important role in repairing DNA strands that may be damaged due to chemicals and radiation. In addition, homologous recombination is used in \emph{gene targeting}~\cite{capecchi2005gene,capecchi1989altering,gerlai2016gene}, whereby certain genetic traits are introduced in a target organism.

Many of the theorems we prove use the geometry of a \emph{regular simplex}. Recall that a regular $n$-simplex is a regular $n$-polytope formed using the convex hull of its $n+1$ vertices. From here on, we shall refer to a regular $n$-simplex simply as a $n$-simplex.

Before we move on specific examples, we will define the notion of an autocatalytic recombination network with repeated species.

\begin{definition}
Consider a reaction network $\mathcal{G}$ with species $X_1,X_2,...,X_n$. Then $\mathcal{G}$ is said to be an \emph{autocatalytic recombination network of dimension $n$ with repeated reactant species} if it consists of reactions of the form $2X_i \rightarrow 3X_i$ and $2X_i \rightarrow 2X_i + X_{i+1}$ for $i=1,2,..,n$, where $X_{n+1}=X_1$(in the cyclic sense). 
\end{definition}

\subsection{\textbf{Autocatalytic recombination network of dimension three with repeated reactant species}} 

We consider an autocatalytic recombination network of dimension three given by $\GG^{\rm rep}_{\rm recomb(3)}$ in Table~\ref{same_species}. Here the reactant of each reaction is $2X_i$, where $1\leq i\leq 3$. By Theorem~\ref{thm:relative_autocatalysis},  the network $\tilde{\GG}^{\rm rep}_{\rm recomb(3)}$ given in Table~\ref{same_species} generates dynamics corresponding to the relative populations in $\GG^{\rm rep}_{\rm recomb(3)}$. The network $\tilde{\GG}^{\rm rep}_{\rm recomb(3)}$ is also depicted in Figure~\ref{fig:modified_hypercycle}.(a).

\begin{table}[h!]
\caption{Recombinant network of dimension three with repeated reactant species}
\centering
\begin{tabular}{|c|c|}
\hline
\rule{0pt}{20pt} $\GG^{\rm rep}_{\rm recomb(3)}$ & $\tilde{\GG}^{\rm rep}_{\rm recomb(3)}$  \\ [2ex]
\hline
$2X_1  \xrightarrow[]{k_1} 3X_1 $ & $2X_1 + X_2 \xrightarrow{k_1} 3X_1  $  \\
															   & $2X_1 + X_3 \xrightarrow{k_1} 3X_1 $ \\[1ex]
																				  
$2X_1 \xrightarrow[]{k_2} 2X_1 + X_2 $& $3X_1 \xrightarrow{k_2} 2X_1 + X_2$  \\
																		  & $2X_1 + X_3 \xrightarrow{k_2} 2X_1 + X_2 $ \\[1ex]
																		           
$2X_2  \xrightarrow[]{k_3} 3X_2  $ & $2X_2 + X_3 \xrightarrow{k_3} 3X_2$  \\
																  & $2X_2 + X_1 \xrightarrow{k_3} 3X_2$\\[1ex]
																  
$2X_2  \xrightarrow[]{k_4} 2X_2 + X_3 $ &$3X_2 \xrightarrow{k_4} 2X_2 + X_3$ \\
																			& $2X_2 + X_1 \xrightarrow{k_4} 2X_2 + X_3$  \\[1ex]
																              														  
$2X_3  \xrightarrow[]{k_5} 3X_3 $ & $2X_3 + X_1 \xrightarrow{k_5} 3X_3$  \\
																 & $2X_3 + X_2 \xrightarrow{k_5} 3X_3$\\[1ex]							
																              
$2X_3  \xrightarrow[]{k_6} 2X_3 + X_1 $ & $3X_3 \xrightarrow{k_6} 2X_3 + X_1$ \\
 																			& $2X_3 + X_2 \xrightarrow{k_6} 2X_3 + X_1$  \\[1ex]																															
																           																																      	\hline
\end{tabular}
\label{same_species}
\end{table}

\begin{theorem}
Any variable-$k$ dynamical system generated by $\tilde{\GG}^{\rm rep}_{\rm recomb(3)}$ is permanent.
\end{theorem}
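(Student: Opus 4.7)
The plan is to establish permanence by showing that the reaction network $\tilde{\GG}^{\rm rep}_{\rm recomb(3)}$ is strongly endotactic, and then invoking the theorem of Gopalkrishnan, Miller, and Shiu that any variable-$k$ strongly endotactic mass-action system is permanent. Concretely, I will reduce the verification of strong endotacticity to the geometric criterion provided by Proposition~\ref{prop:endotactic_face_reaction}, so the work is to check a finite list of faces of a simplex.

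First I would list the source and target complexes appearing in Table~\ref{same_species}. A direct inspection shows that every complex is a lattice point on the plane $x_1+x_2+x_3=3$, and that the three extreme points $3X_1$, $3X_2$, $3X_3$ all appear as source vertices. Consequently the convex hull of the source vertices is the triangle $\Delta$ with vertices $(3,0,0),(0,3,0),(0,0,3)$, and every vertex of the network lies in $\Delta$. This puts us in the setting of Proposition~\ref{prop:endotactic_face_reaction}, so strong endotacticity is equivalent to the following: for each of the six proper faces of $\Delta$ (the three corners and the three edges), there is a reaction with source on the face and target off the face.

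Second I would check this list of faces. For each corner $3X_i$, the ``priming'' reaction $3X_i \xrightarrow{k_{2i}} 2X_i + X_{i+1}$ of $\tilde{\GG}^{\rm rep}_{\rm recomb(3)}$ has source at the corner and target at an adjacent non-corner point, so the corner face is witnessed. For each edge $\{x_i = 0\}$ of $\Delta$, the reaction $3X_j \to 2X_j + X_{j+1}$, where $X_j$ is a vertex on that edge whose indexed successor is $X_i$, has source on the edge and target off the edge (since its target has positive $x_i$-coordinate). This gives a witness for each of the three edges. Hence Proposition~\ref{prop:endotactic_face_reaction} applies and $\tilde{\GG}^{\rm rep}_{\rm recomb(3)}$ is strongly endotactic.

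Third, to conclude permanence, I would invoke the main theorem of \cite{gopalkrishnan2014geometric}: every variable-$k$ mass-action system generated by a strongly endotactic reaction network is permanent on each positive compatibility class. Since all reaction vectors of $\tilde{\GG}^{\rm rep}_{\rm recomb(3)}$ sum to zero in each coordinate sum (i.e., the stoichiometric subspace lies in $\{x_1+x_2+x_3=0\}$), the positive compatibility classes are simplices of the form $\{\x\in\mathbb{R}^3_{>0}\colon x_1+x_2+x_3=c\}$, which is the natural ambient set for relative populations. The conclusion of the theorem follows. The only delicate step is the face-by-face verification in the second paragraph, so I would carry out that bookkeeping carefully, ideally with a small diagram matching Figure~\ref{fig:modified_hypercycle}(a), to make it transparent that every proper face of $\Delta$ has a witness reaction.
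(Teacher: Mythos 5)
Your proposal is correct and follows exactly the paper's route: establish strong endotacticity of $\tilde{\GG}^{\rm rep}_{\rm recomb(3)}$ via Proposition~\ref{prop:endotactic_face_reaction} (the convex hull of the sources being the triangle with corners $3X_1,3X_2,3X_3$ containing all vertices) and then invoke the permanence theorem for strongly endotactic variable-$k$ systems from \cite{gopalkrishnan2014geometric}. The only difference is that you carry out the face-by-face verification explicitly (correctly, using the reactions $3X_j\to 2X_j+X_{j+1}$ as witnesses for both the corners and the edges), whereas the paper delegates this check to inspection of Figure~\ref{fig:modified_hypercycle}.
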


\begin{proof}
We will show that $\tilde{\GG}^{\rm rep}_{\rm recomb(3)}$ is strongly endotactic. It will then follow  from~\cite{gopalkrishnan2014geometric} that any variable-$k$ dynamical system generated by it is permanent. The convex hull formed by the source vertices of $\tilde{\GG}^{\rm rep}_{\rm recomb(3)}$ is a triangle as shown in Figure~\ref{fig:modified_hypercycle}.(b). In particular, the triangle contains all the vertices of $\tilde{\GG}^{\rm rep}_{\rm recomb(3)}$. By Proposition~\ref{prop:endotactic_face_reaction}, to show that $\tilde{\GG}^{\rm rep}_{\rm recomb(3)}$ is strongly endotactic, it suffices to show that for every proper face of the triangle, there exists a reaction with source on this face and target that does not belong to this face. One can check that this is the case from Figure~\ref{fig:modified_hypercycle}.(a). Therefore, $\tilde{\GG}^{\rm rep}_{\rm recomb(3)}$ is strongly endotactic. 
\end{proof}

\begin{figure*}[h!]
\centering
\includegraphics[scale=0.4]{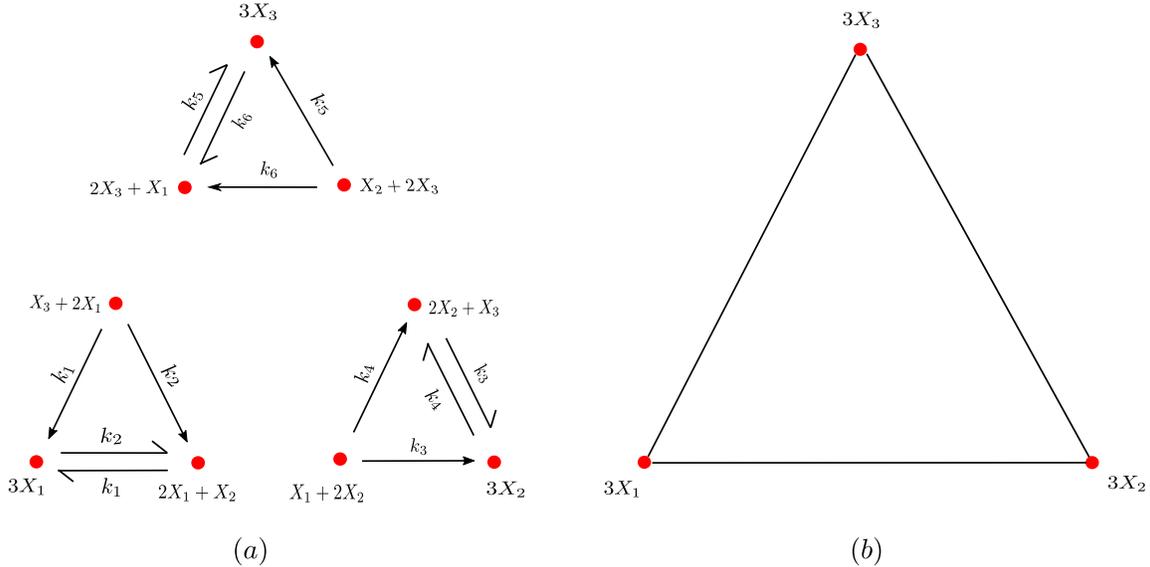}
\caption{(a) Reaction network that generates the dynamics of relative populations of the recombination network of dimension three with repeated reactant species given by $\GG^{\rm rep}_{\rm recomb(3)}$. (b) Convex hull of the source vertices of the network in Figure~\ref{fig:modified_hypercycle}.(a).}
\label{fig:modified_hypercycle}
\end{figure*}

\subsection{\textbf{Autocatalytic recombination network of dimension four with repeated reactant species}}

Here, we consider an autocatalytic recombination network of dimension four given by $\GG^{\rm rep}_{\rm recomb(4)}$ in Table~\ref{same_species_4}, where the reactant of each reaction is $2X_i$, where $1\leq i\leq 4$. 
By Theorem~\ref{thm:relative_autocatalysis}, the network for relative populations is given by $\tilde{\GG}^{\rm rep}_{\rm recomb(4)}$ in Table~\ref{same_species_4} and Figure~\ref{fig:modified_higher_hypercycle}.

\begin{table}[h!]
\caption{Recombinant network of dimension four with repeated reactant species}
\centering
\begin{tabular}{|c|c|}
\hline
\rule{0pt}{20pt} ${\GG}^{\rm rep}_{\rm recomb(4)}$  & $\tilde{\GG}^{\rm rep}_{\rm recomb(4)}$  \\ [2ex]
\hline
$2X_1  \xrightarrow[]{k_1} 3X_1 $ & $2X_1 + X_2 \xrightarrow{k_1} 3X_1  $  \\
															   & $2X_1 + X_3 \xrightarrow{k_1} 3X_1 $ \\
															   & $2X_1 + X_4 \xrightarrow{k_1} 3X_1 $ \\[1ex]	
															   	  
$2X_1 \xrightarrow[]{k_2} 2X_1 + X_2 $& $3X_1 \xrightarrow{k_2} 2X_1 + X_2$  \\
																		  & $2X_1 + X_3 \xrightarrow{k_2} 2X_1 + X_2 $ \\
																		   & $2X_1 + X_4 \xrightarrow{k_2} 2X_1 + X_2 $ \\[1ex]    
																		     
$2X_2  \xrightarrow[]{k_3} 3X_2  $ & $2X_2 + X_3 \xrightarrow{k_3} 3X_2$  \\
																  & $2X_2 + X_1 \xrightarrow{k_3} 3X_2$\\
																  & $2X_2 + X_4 \xrightarrow{k_3} 3X_2$\\[1ex]
																  
$2X_2  \xrightarrow[]{k_4} 2X_2 + X_3 $ &$3X_2 \xrightarrow{k_4} 2X_2 + X_3$ \\
																			& $2X_2 + X_1 \xrightarrow{k_4} 2X_2 + X_3$  \\
																			& $2X_2 + X_4 \xrightarrow{k_4} 2X_2 + X_3$\\[1ex]
																              														  
$2X_3  \xrightarrow[]{k_5} 3X_3 $ & $2X_3 + X_1 \xrightarrow{k_5} 3X_3$  \\
																 & $2X_3 + X_2 \xrightarrow{k_5} 3X_3$\\
																 & $2X_3 + X_4 \xrightarrow{k_5} 3X_3$	 \\[1ex]			
																              
$2X_3  \xrightarrow[]{k_6} 2X_3 + X_4 $ & $3X_3 \xrightarrow{k_6} 2X_3 + X_4$ \\
 																			& $2X_3 + X_2 \xrightarrow{k_6} 2X_3 + X_4$  \\
 																			& $2X_3 + X_4 \xrightarrow{k_6} 2X_3 + X_4$ \\[1ex]		
 																			
$2X_4  \xrightarrow[]{k_7} 3 X_4 $ & $2X_4 + X_1\xrightarrow{k_7} 3X_4$ \\
 																 &  $2X_4 + X_2 \xrightarrow{k_7} 3X_4$  \\
 																 &  $2X_4 + X_3 \xrightarrow{k_7} 3X_4$ \\[1ex]

$2X_4  \xrightarrow[]{k_8} 2X_4 + X_1 $ & $3X_4 \xrightarrow{k_8} 2X_4 + X_1$ \\
 																			& $2X_4 + X_2 \xrightarrow{k_8} 2X_4 + X_1$  \\
 																			& $2X_4 + X_3 \xrightarrow{k_8} 2X_4 + X_1$ \\[1ex]																	  														
\hline
\end{tabular}
\label{same_species_4}
\end{table}

\begin{theorem}\label{thm:4_strongly_endotactic}
Any variable-$k$ dynamical system generated by $\tilde{\GG}^{\rm rep}_{\rm recomb(4)}$ is permanent.
\end{theorem}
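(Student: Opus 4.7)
The plan is to mirror the proof of the three-dimensional analogue: establish that $\tilde{\GG}^{\rm rep}_{\rm recomb(4)}$ is strongly endotactic and then invoke the theorem of~\cite{gopalkrishnan2014geometric} guaranteeing permanence of every variable-$k$ mass-action system generated by a strongly endotactic network.

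To verify strong endotacticity, I would apply Proposition~\ref{prop:endotactic_face_reaction}. Every reaction in $\tilde{\GG}^{\rm rep}_{\rm recomb(4)}$ is tri-molecular, so each source and each target has coordinate sum $3$ and lies in the hyperplane $\{x_1+x_2+x_3+x_4=3\}$. The four points $3X_1, 3X_2, 3X_3, 3X_4$ are the extreme points of the convex hull of the source vertices, which is therefore a $3$-simplex (tetrahedron); every other source vertex is of the form $2X_i + X_j$ and lies on an edge of this tetrahedron, and every target vertex has the same shape. In particular, all vertices of $\tilde{\GG}^{\rm rep}_{\rm recomb(4)}$ lie in the convex hull of its sources, so it suffices to exhibit, for each proper face of the tetrahedron, a reaction whose source lies on the face and whose target does not.

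The proper faces split into three types. For each vertex-face $\{3X_i\}$ the reaction $3X_i \to 2X_i + X_{i+1}$ (indices mod $4$) leaves the face. For each triangular face $\{x_l = 0\}$ the reaction $X_p + 2X_{l-1} \to 2X_{l-1} + X_l$ with any $p \neq l$ sits on the face and has target with $x_l = 1$, so it leaves the face. The delicate case --- and the step I expect to be the main obstacle --- is the six edges $\{3X_i, 3X_j\}$, because reactions of the form $X_p + 2X_i \to 3X_i$ or $X_p + 2X_i \to 2X_i + X_{i+1}$ whose source sits on such an edge are prone to have a target that also lies on that same edge. The resolution is to exploit the cyclic shift in the network: for an adjacent edge $\{3X_i, 3X_{i+1}\}$ the reaction $X_i + 2X_{i+1} \to 2X_{i+1} + X_{i+2}$ has source on the edge and target with a nonzero $X_{i+2}$-component, hence off the edge; for the two diagonal edges $\{3X_1, 3X_3\}$ and $\{3X_2, 3X_4\}$ the reactions $X_3 + 2X_1 \to 2X_1 + X_2$ and $X_4 + 2X_2 \to 2X_2 + X_3$ respectively play the same role. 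Once this case-by-case enumeration is complete, Proposition~\ref{prop:endotactic_face_reaction} yields strong endotacticity of $\tilde{\GG}^{\rm rep}_{\rm recomb(4)}$, and permanence of any variable-$k$ system generated by it follows from~\cite{gopalkrishnan2014geometric}.
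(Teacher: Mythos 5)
Your proof takes essentially the same route as the paper's: show that all vertices of $\tilde{\GG}^{\rm rep}_{\rm recomb(4)}$ lie in the tetrahedron spanned by $3X_1,\dots,3X_4$, verify the face condition of Proposition~\ref{prop:endotactic_face_reaction} to conclude strong endotacticity, and then invoke~\cite{gopalkrishnan2014geometric} for permanence. The only difference is that you carry out the face-by-face check (vertices, the four adjacent and two diagonal edges, and the facets $\{x_l=0\}$) explicitly and correctly, whereas the paper delegates this verification to an inspection of Figure~\ref{fig:modified_higher_hypercycle}.
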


\begin{proof}
We will show that $\tilde{\GG}^{\rm rep}_{\rm recomb(4)}$ is strongly endotactic. It will then follow from~\cite{gopalkrishnan2014geometric} that any variable-$k$ dynamical system generated by it is permanent.
The convex hull formed by the source vertices of $\tilde{\GG}^{\rm rep}_{\rm recomb(4)}$ is the tetrahedron shown in Figure~\ref{fig:modified_higher_hypercycle}. In particular, the tetrahedron contains all the vertices of $\tilde{\GG}^{\rm rep}_{\rm recomb(4)}$. By Proposition~\ref{prop:endotactic_face_reaction}, to show that $\tilde{\GG}^{\rm rep}_{\rm recomb(4)}$ is strongly endotactic, it suffices to show that for every proper face of the tetrahedron, there exists a reaction with source on this face and target that does not belong to this face. One can check that this is the case from Figure~\ref{fig:modified_higher_hypercycle}. Therefore, $\tilde{\GG}^{\rm rep}_{\rm recomb(4)}$ is strongly endotactic. 
\end{proof}

\begin{figure*}[h!]
\centering
\includegraphics[scale=0.57]{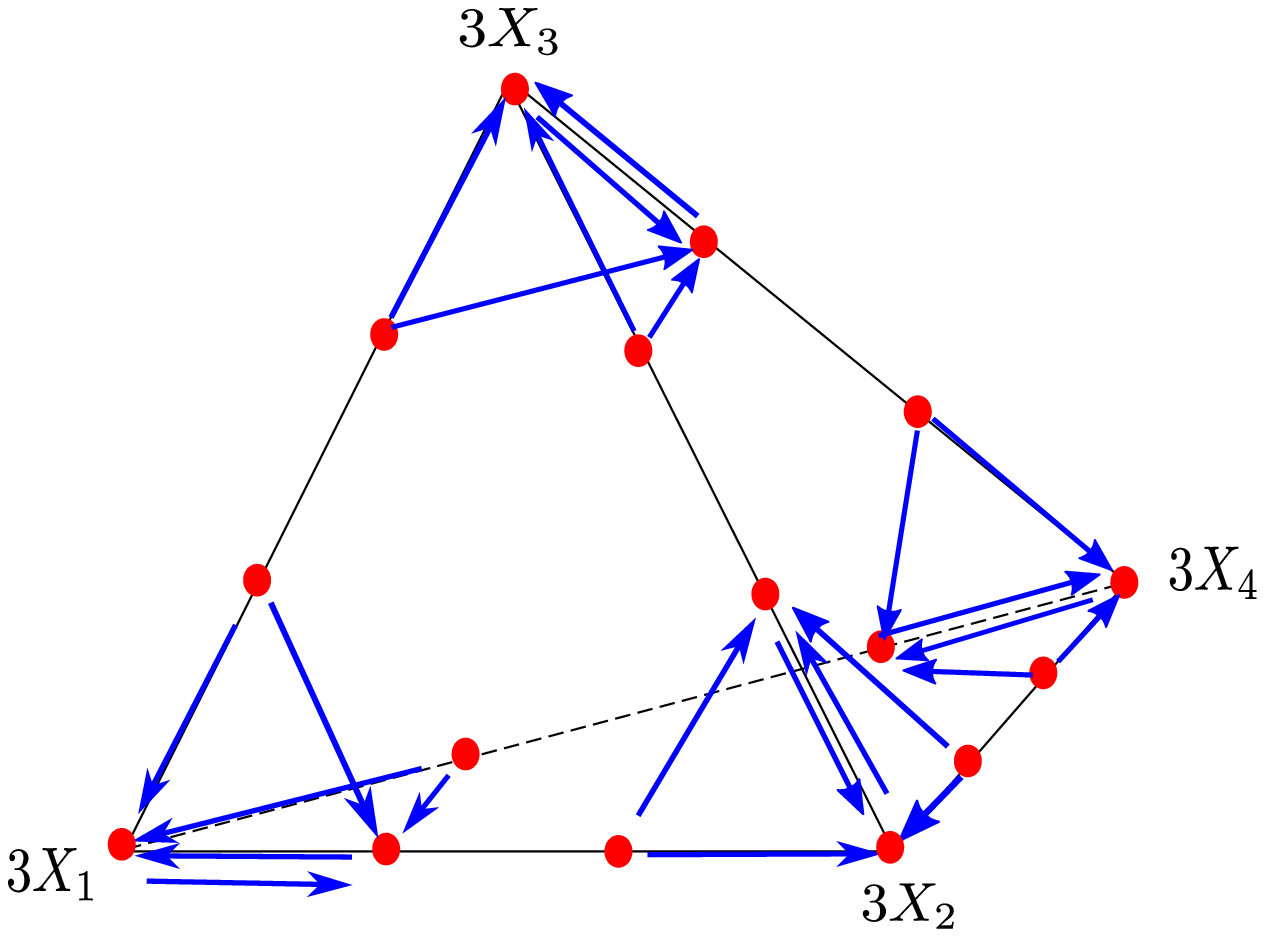}
\caption{Reaction network $\tilde{\GG}^{\rm rep}_{\rm recomb(4)}$ of Table~\ref{same_species_4}.}\label{fig:modified_higher_hypercycle}
\end{figure*} 

\subsection{\textbf{Autocatalytic recombination network of dimension n with repeated reactant species}}

We claim that the variable-$k$ dynamical system generated by the relative population network corresponding to the autocatalytic recombination network with repeated reactant species in higher dimensions, the analog of $\tilde{\GG}^{\rm rep}_{\rm recomb(3)}$, is also permanent. Consider ${\GG}^{\rm rep}_{\rm recomb(n)}$ consisting of the following reactions
\begin{align*}
2X_1 &\rightarrow 3X_1  \\
2X_1 &\rightarrow 2X_1 + X_2 \\
& \quad\vdots \\
2X_{n-1} &\rightarrow 3X_{n-1} \\
2X_{n-1} &\rightarrow 2X_{n-1} + X_n \\
2X_{n} &\rightarrow 3X_{n} \\
2X_{n-1} &\rightarrow 2X_{n} + X_1 .\\
\end{align*}
Let $\tilde{\GG}^{\rm rep}_{\rm recomb(n)}$ be constructed as in Theorem~\ref{thm:relative_autocatalysis}. Then we have the following:

\begin{theorem}\label{cor:recombination_dim_n}
Any variable-$k$ dynamical system generated by $\tilde{\GG}^{\rm rep}_{\rm recomb(n)}$ is permanent.
\end{theorem}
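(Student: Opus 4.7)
The plan is to mimic the proofs of the $n=3$ and $n=4$ cases: I will show that $\tilde{\GG}^{\rm rep}_{\rm recomb(n)}$ is strongly endotactic and conclude permanence of any variable-$k$ dynamical system it generates via the result of Gopalkrishnan--Miller--Shiu~\cite{gopalkrishnan2014geometric}. The main geometric tool will be Proposition~\ref{prop:endotactic_face_reaction}, so the first step is to verify its hypothesis, namely that every vertex of $\tilde{\GG}^{\rm rep}_{\rm recomb(n)}$ lies in the convex hull of the source vertices.

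By Theorem~\ref{thm:relative_autocatalysis} applied to the reactions $2X_i \to 3X_i$ and $2X_i \to 2X_i + X_{i+1}$ of ${\GG}^{\rm rep}_{\rm recomb(n)}$, every source of $\tilde{\GG}^{\rm rep}_{\rm recomb(n)}$ is of the form $2e_i + e_p$ with $i,p \in \{1,\dots,n\}$ (where $2e_i + e_i = 3e_i$), and every target is of the form $3e_i$ or $2e_i + e_{i+1}$. All of these lattice points lie on the simplex $\Delta$ in the hyperplane $\{\sum_j x_j = 3\}$ with extreme points $3e_1, \dots, 3e_n$; moreover every target is already a source, so the convex hull of source vertices is exactly $\Delta$ and contains all vertices. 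The next step is to pick an arbitrary proper face $F_I$ of $\Delta$, indexed by a proper non-empty subset $I \subsetneq \{1,\dots,n\}$ (the face consisting of points with $x_j = 0$ for $j \notin I$), and exhibit a reaction with source on $F_I$ but target off $F_I$. The key observation is that, since $I$ is a proper non-empty subset of the cyclic index set, there must exist $i \in I$ with $i+1 \notin I$ (indices mod $n$); otherwise $I$ would be closed under the cyclic successor and therefore equal to $\{1,\dots,n\}$. For such $i$, the reaction $3X_i \to 2X_i + X_{i+1}$, which belongs to $\tilde{\GG}^{\rm rep}_{\rm recomb(n)}$ by Theorem~\ref{thm:relative_autocatalysis} applied with $p = i$ to $2X_i \to 2X_i + X_{i+1}$, has source $3e_i \in F_I$ and target $2e_i + e_{i+1} \notin F_I$, as required.

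Applying Proposition~\ref{prop:endotactic_face_reaction} then yields strong endotacticity, and the theorem follows from~\cite{gopalkrishnan2014geometric}. I do not expect a serious obstacle; the only delicate point is the cyclic combinatorics, namely the observation that a single family of reactions $3X_i \to 2X_i + X_{i+1}$ suffices to break every proper face of $\Delta$, which is precisely what the cyclic ``boundary'' of a proper non-empty subset provides. The figures and tabulations for $n=3,4$ served as useful sanity checks; the general argument above subsumes them without additional case analysis.
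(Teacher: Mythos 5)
Your proposal is correct and follows essentially the same route as the paper: both reduce permanence to strong endotacticity via \cite{gopalkrishnan2014geometric}, identify the convex hull of sources as the simplex with extreme points $3X_1,\dots,3X_n$, and use the reactions $3X_i\to 2X_i+X_{i+1}$ to escape every proper face. The only cosmetic difference is that the paper argues by contradiction (a face closed under these reactions propagates around the cycle to the whole simplex), whereas you argue directly by picking a boundary index $i\in I$ with $i+1\notin I$ — the contrapositive of the same cyclic observation.
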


\begin{proof}
We will show that $\tilde{\GG}^{\rm rep}_{\rm recomb(n)}$ is strongly endotactic. It will then follow from~\cite{gopalkrishnan2014geometric} that any variable-$k$ dynamical system generated by it is permanent. Consider the convex hull formed by the source vertices in $\tilde{\GG}^{\rm rep}_{\rm recomb(n)}$. Each of the vertices $3X_1,3X_2,...,3X_n$ are sources for some reaction in $\tilde{\GG}^{\rm rep}_{\rm recomb(n)}$ and are extremal points of the convex hull. Therefore this convex hull is a $(n-1)$-simplex. For contradiction, assume that $\tilde{\GG}^{\rm rep}_{\rm recomb(n)}$ is not strongly endotactic. Then by Corollary~\ref{prop:not_endotactic}, there exists a proper face of this simplex such that for every reaction with source on this face has target on this face. Let $F=(3X_{i_1},3X_{i_2},...,3X_{i_k})\subset (3X_1,3X_2,...,3X_n)$ be this face. Note that $\tilde{\GG}^{\rm rep}_{\rm recomb(n)}$ contains reactions of the form $3X_j\rightarrow 2X_j  + X_{j+1}$ for every $3X_j\in F$. Since $\tilde{\GG}^{\rm rep}_{\rm recomb(n)}$ was assumed to be not strongly endotactic, we get that $2X_j + X_{j+1}\in F$. Extending the line segment joining $3X_j$ and $2X_j + X_{j+1}$, we get that $3X_{j+1}\in F$. Continuing this way, we conclude $F=(3X_1,3X_2,...,3X_n)$, contradicting the fact that $F$ is a proper face of this simplex. Therefore, $\tilde{\GG}^{\rm rep}_{\rm recomb(n)}$ is strongly endotactic. 
\end{proof}
Theorem~\ref{cor:recombination_dim_n} can be generalized to a larger family of reaction networks that have certain properties associated with an object called the \emph{production graph}. The next proposition illustrates this point. We first define the notion of a production graph. 

\begin{definition}\label{defn:production_graph}
Given a reaction network $\GG=(V,E)$, the \emph{production graph} $\mathcal{P}(\GG)$ is a graph whose vertices are given by the species in $\GG$ such that there is a directed edge from species $X_i$ to species $X_j$ in $\mathcal{P}(\GG)$ if there exists an edge $\by\to\by'\in E$ such that $\supp(\by)=X_i$ and $X_j\in \supp(\by')$.
\end{definition}

\begin{proposition}\label{prop:strongly_connected}
Consider a bimolecular autocatalytic system $\GG=(V,E)$ consisting of reactions of the form $2X_i \rightarrow 2X_i + X_j$ for  $i=1,2,...,n$ and $j\neq i$ such that $\mathcal{P}(\GG)$ is strongly connected. Let $\tilde{\GG}$ denote the reaction network corresponding to the relative populations of $\GG$. Then any variable-$k$ dynamical system generated by $\tilde{\GG}$ is permanent.
\end{proposition}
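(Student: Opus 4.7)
The plan is to imitate the strategy of Theorem~\ref{cor:recombination_dim_n}: show that $\tilde{\GG}$ is strongly endotactic and then invoke the permanence result of~\cite{gopalkrishnan2014geometric}. First I would unpack $\tilde{\GG}$ via Theorem~\ref{thm:relative_autocatalysis}: each reaction $2X_i \to 2X_i + X_j$ of $\GG$ (with $j \neq i$) contributes the reactions $X_p + 2X_i \to 2X_i + X_j$ for every $p \in \{1,\dots,n\}$ with $p \neq j$. Every source and target of $\tilde{\GG}$ is a lattice point on the affine hyperplane $\{\x : \sum_k x_k = 3\}$ with non-negative coordinates. Since $\mathcal{P}(\GG)$ is strongly connected, each species $X_i$ has at least one outgoing edge in $\mathcal{P}(\GG)$, so $\GG$ contains some reaction $2X_i \to 2X_i + X_j$ with $j \neq i$; taking $p = i$ in the above shows that $3X_i$ is a source vertex of $\tilde{\GG}$ for every~$i$.

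Consequently, the convex hull of the source vertices of $\tilde{\GG}$ contains $3X_1,\dots,3X_n$ and therefore equals the $(n-1)$-simplex $\Delta = \op{conv}\{3X_1,\dots,3X_n\}$. Since $\Delta$ is the whole $3$-level lattice simplex, it also contains every target vertex of $\tilde{\GG}$, so all vertices of $\tilde{\GG}$ lie in the convex hull of its source vertices. This puts us in the setting of Proposition~\ref{prop:endotactic_face_reaction}: to conclude strong endotacticity it suffices to exhibit, for each proper face $F$ of $\Delta$, a reaction of $\tilde{\GG}$ whose source lies on $F$ and whose target does not. A proper face $F$ of $\Delta$ is spanned by $\{3X_i : i \in I\}$ for some proper non-empty $I \subsetneq \{1,\dots,n\}$, and a lattice point of $\Delta$ lies on $F$ if and only if its support is contained in $\{X_i : i \in I\}$.

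The key step, and the place where the hypothesis on $\mathcal{P}(\GG)$ is actually used, is to produce the required reaction for a given $I$. Since $\mathcal{P}(\GG)$ is strongly connected and $I$ is a proper non-empty subset of the vertex set, there must be some edge in $\mathcal{P}(\GG)$ from a vertex $X_i$ with $i \in I$ to a vertex $X_j$ with $j \notin I$. By definition of the production graph, this edge comes from a reaction $2X_i \to 2X_i + X_j$ in $\GG$, which in turn contributes the reaction $3X_i \to 2X_i + X_j$ in $\tilde{\GG}$. The source $3X_i$ lies on $F$ because $i \in I$, while the target $2X_i + X_j$ is not on $F$ because its support contains $X_j$ with $j \notin I$. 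This verifies the face condition, so $\tilde{\GG}$ is strongly endotactic and permanence of any variable-$k$ system it generates follows from~\cite{gopalkrishnan2014geometric}.

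I do not expect a serious obstacle here; the proposition is essentially a clean abstraction of Theorem~\ref{cor:recombination_dim_n}, with strong connectivity of $\mathcal{P}(\GG)$ replacing the explicit cyclic structure used there. The only points that need to be checked carefully are (i) that $3X_i$ is genuinely a source vertex of $\tilde{\GG}$ for every $i$, which is why strong connectivity (and not merely connectivity) is invoked to supply an outgoing edge at every vertex, and (ii) that the face condition is formulated so that a single ``escaping'' reaction per face is enough — which is exactly what Proposition~\ref{prop:endotactic_face_reaction} provides.
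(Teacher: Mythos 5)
Your proposal is correct and follows essentially the same route as the paper: establish that the source vertices of $\tilde{\GG}$ span the full $(n-1)$-simplex, verify the face criterion of Proposition~\ref{prop:endotactic_face_reaction}, and invoke the permanence result of~\cite{gopalkrishnan2014geometric} for strongly endotactic networks. The only (cosmetic) difference is that the paper argues by contradiction, propagating face membership along edges of $\mathcal{P}(\GG)$ as in Theorem~\ref{cor:recombination_dim_n}, whereas you directly use the fact that a proper nonempty vertex subset of a strongly connected graph has an outgoing edge — the contrapositive of the same idea, stated more self-containedly.
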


\begin{proof}
We will show that $\tilde{\GG}$ is strongly endotactic. It will then follow from~\cite{gopalkrishnan2014geometric} that any variable-$k$ dynamical system generated by it is permanent. Since $P(\GG)$ is strongly connected, there exists a reaction $2X_i \rightarrow 2X_i + X_j$ for each $i=1,2,...,n$. By Theorem~\ref{thm:relative_autocatalysis} there exists reactions in $\tilde{\GG}$ which are of the form $3X_j\rightarrow 2X_j  + X_k$ such that $k\neq j$ for every species $X_j$. Therefore the convex hull of the source vertices of $\tilde{\GG}$ is a simplex with extremal points $(3X_1,3X_2,...,3X_n)$. For contradiction, assume that $\tilde{\GG}^{\rm rep}_{\rm recomb(n)}$ is not strongly endotactic. Then by Corollary~\ref{prop:not_endotactic}, there exists a proper face of this simplex such that for every reaction with source on this face has target on this face. Let $F=(3X_{i_1},3X_{i_2},...,3X_{i_k})\subset (3X_1,3X_2,...,3X_n)$ be this face. Since $\mathcal{P}(\GG)$ is strongly connected, one can argue as in the proof of Theorem~\ref{cor:recombination_dim_n} to show that $F=(3X_1,3X_2,...,3X_n)$, contradicting the fact that $F$ is a proper face of the simplex. Therefore, the network $\tilde{\GG}$ is strongly endotactic.
\end{proof}

The relative population network $\tilde{\GG}$ in Proposition~\ref{prop:strongly_connected} can be enlarged to a new network (under some constraints) so that the dynamical system generated by this new network is still permanent. The next corollary illustrates this point.   

\begin{proposition}\label{prop:larger_endotactic}
Consider reaction networks $\tilde{G}_1$, $\tilde{G}_2$ such that the following hold
\begin{enumerate}
\item $\tilde{G}_1$ is strongly endotactic.
\item $\tilde{G}_1$ is a subnetwork of $\tilde{G}_2$.
\item The vertices of $\tilde{G}_2$ lie in the convex hull of the source vertices of $\tilde{G}_1$. 
\end{enumerate}
Then any variable-$k$ dynamical system generated by $\tilde{G}_2$ is permanent.
\end{proposition}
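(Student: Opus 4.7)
The plan is to prove permanence by showing that $\tilde{G}_2$ is itself strongly endotactic; once this is done, the result of Gopalkrishnan, Miller and Shiu~\cite{gopalkrishnan2014geometric} (the same tool used in the previous theorems of this section) immediately gives permanence for every variable-$k$ mass-action system on $\tilde{G}_2$. So the whole task reduces to verifying the hypotheses of Proposition~\ref{prop:endotactic_face_reaction} for $\tilde{G}_2$.

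The first step is a short geometric observation: the convex hull of the source vertices of $\tilde{G}_1$ coincides with the convex hull of the source vertices of $\tilde{G}_2$. Indeed, since $\tilde{G}_1$ is a subnetwork of $\tilde{G}_2$, every source vertex of $\tilde{G}_1$ is a source vertex of $\tilde{G}_2$; conversely, every source vertex of $\tilde{G}_2$ is in particular a vertex of $\tilde{G}_2$, and by assumption~(3) it lies in the convex hull of the source vertices of $\tilde{G}_1$. Taking convex hulls of both inclusions gives the equality. In particular, assumption~(3) now says that every vertex of $\tilde{G}_2$ lies in the convex hull of its own source vertices, which is precisely the hypothesis needed to apply Proposition~\ref{prop:endotactic_face_reaction} to $\tilde{G}_2$.

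The second step transfers the face-crossing condition from $\tilde{G}_1$ to $\tilde{G}_2$. By the equality of the two convex hulls, every proper face $f$ of the convex hull of the source vertices of $\tilde{G}_2$ is also a proper face of the convex hull of the source vertices of $\tilde{G}_1$. Since $\tilde{G}_1$ is strongly endotactic and its vertices (being vertices of $\tilde{G}_2$) lie in the convex hull of its source vertices, Proposition~\ref{prop:endotactic_face_reaction} applied to $\tilde{G}_1$ produces a reaction $\s_0 \to \s_0' \in \tilde{G}_1$ whose source lies on $f$ and whose target does not lie on $f$. Because $\tilde{G}_1$ is a subnetwork of $\tilde{G}_2$, this reaction belongs to $\tilde{G}_2$ as well. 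Applying Proposition~\ref{prop:endotactic_face_reaction} in the opposite direction to $\tilde{G}_2$ now yields that $\tilde{G}_2$ is strongly endotactic, and permanence follows from~\cite{gopalkrishnan2014geometric}.

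The argument is mostly bookkeeping; the only place where one must be careful is justifying that Proposition~\ref{prop:endotactic_face_reaction} is applicable on both sides, which requires the convex-hull equality established in the first step. With that observation in hand, no additional work is needed, and the proof is essentially a two-line reduction.
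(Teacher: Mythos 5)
Your proof is correct and follows essentially the same route as the paper: both arguments establish that the convex hulls of the source vertices of $\tilde{G}_1$ and $\tilde{G}_2$ coincide and then transfer the face-crossing reactions of $\tilde{G}_1$ to $\tilde{G}_2$ to conclude strong endotacticity, hence permanence via~\cite{gopalkrishnan2014geometric}. The only (cosmetic) difference is that you route both directions through Proposition~\ref{prop:endotactic_face_reaction} while the paper invokes the parallel sweep test of Proposition~\ref{test:parallel_sweep} directly; your version is if anything slightly more careful, since it explicitly checks the hypothesis that the vertices of each network lie in the convex hull of its own source vertices before applying the equivalence.
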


\begin{proof}
We will show that $\tilde{G}_2$ is strongly endotactic. It will then follow from~\cite{gopalkrishnan2014geometric} that any variable-$k$ dynamical system generated by it is permanent. Since $\tilde{G}_1 \subseteq\tilde{G}_2$ and the vertices of $\tilde{G}_2$ lie in the convex hull of the source vertices of $\tilde{G}_1$, the convex hull of the source vertices of $\tilde{G}_1$ is the same as the convex hull of the source vertices of $\tilde{G}_2$. Let $\w\in\mathbb{R}^n$ be such that $\w \not\in S^\perp$. Let $H $ be the hyperplane perpendicular to $\w$ that contains a source vertex $\s_1$ such that for every other source vertex $\s_2$, we have $(\s_2-\s_1)\cdot \w \geq 0$. Therefore, the intersection of $H$ with the convex hull of source vertices of $\GG$ is a proper face of the convex hull. Since $\tilde{G}_1$ is strongly endotactic, by the parallel sweep Proposition~\ref{test:parallel_sweep}, there exists a reaction in $\tilde{G}_1$ with source on this face such that it points inside this convex hull. Since $\tilde{G}_1 \subseteq\tilde{G}_2$, this reaction is also contained in $\tilde{G}_2$. This implies that $\tilde{G}_2$ is strongly endotactic.
\end{proof}

\begin{corollary}
Suppose $G_1$ is a bimolecular autocatalytic network with repeated species and $P(G_1)$ is strong connected. Suppose $G_2$ is another bimolecular autocatalytic network such that $G_1\subseteq G_2$. Let $\tilde{G}_1$ and $\tilde{G}_2$ be the networks corresponding to the relative populations of $G_1$ and $G_2$ respectively. Then any variable-$k$ dynamical system generated by $\tilde{G}_2$ is permanent.
\end{corollary}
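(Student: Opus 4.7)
The plan is to verify the three hypotheses of Proposition~\ref{prop:larger_endotactic} for the pair $(\tilde{G}_1, \tilde{G}_2)$ and conclude permanence directly from it. Specifically, I need to check: (i) $\tilde{G}_1$ is strongly endotactic; (ii) $\tilde{G}_1 \subseteq \tilde{G}_2$; and (iii) every vertex of $\tilde{G}_2$ lies in the convex hull of the source vertices of $\tilde{G}_1$.

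For (i), I would invoke Proposition~\ref{prop:strongly_connected}: since $G_1$ is a bimolecular autocatalytic network with repeated reactant species and $\mathcal{P}(G_1)$ is strongly connected, $\tilde{G}_1$ is strongly endotactic. (Should $G_1$ also contain self-replication reactions $2X_i \to 3X_i$ that strictly speaking fall outside the template of Proposition~\ref{prop:strongly_connected}, I would pass to the sub-network $G_1'$ of $G_1$ consisting only of the cross-catalytic reactions $2X_i \to 2X_i + X_j$ with $j \neq i$; strong connectivity of $\mathcal{P}(G_1')$ is inherited and yields $\tilde{G}_1'$ strongly endotactic, and I would then use $\tilde{G}_1'$ in the role of $\tilde{G}_1$ for the rest of the argument.)

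For (ii), the transformation of Theorem~\ref{thm:relative_autocatalysis} is monotone under inclusion: each reaction $X_i + X_j \to X_i + X_j + X_l$ in the source network produces the fixed family $\{X_p + X_i + X_j \to X_i + X_j + X_l : p \neq l\}$ in the associated relative-population network. Since $G_1 \subseteq G_2$ and both are bimolecular autocatalytic, this monotonicity immediately yields $\tilde{G}_1 \subseteq \tilde{G}_2$.

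For (iii), I would use that every reaction of $\tilde{G}_2$ has both source and target of the form $X_a + X_b + X_c$, i.e., a non-negative integer lattice point whose coordinates sum to $3$; all such points lie in the simplex with extremal points $3X_1, \ldots, 3X_n$. It then suffices to show that each extremal point $3X_i$ is a source vertex of $\tilde{G}_1$. Strong connectivity of $\mathcal{P}(G_1)$ forces each $X_i$ to have an outgoing edge in $\mathcal{P}(G_1)$ to some $X_{\sigma(i)} \neq X_i$; by the definition of the production graph this corresponds to a reaction $2X_i \to 2X_i + X_{\sigma(i)}$ in $G_1$. Applying Theorem~\ref{thm:relative_autocatalysis} with $p = i$ (legal because $i \neq \sigma(i)$) produces the reaction $3X_i \to 2X_i + X_{\sigma(i)}$ in $\tilde{G}_1$, whose source is $3X_i$. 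So the convex hull of source vertices of $\tilde{G}_1$ is the full $(n-1)$-simplex on $\{3X_1, \ldots, 3X_n\}$ and therefore contains every vertex of $\tilde{G}_2$. Proposition~\ref{prop:larger_endotactic} then yields permanence of every variable-$k$ dynamical system generated by $\tilde{G}_2$.

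The delicate point is step (iii): strong connectivity of $\mathcal{P}(G_1)$, together with the repeated-reactant structure, is precisely what is needed to exhibit every corner $3X_i$ as a source of $\tilde{G}_1$. Without strong connectivity, some $3X_i$ could fail to be a source, the relevant convex hull would be a lower-dimensional or strictly smaller sub-simplex, and vertices of $\tilde{G}_2$ could escape it, invalidating the hypothesis of Proposition~\ref{prop:larger_endotactic}.
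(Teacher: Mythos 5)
Your proposal is correct and follows essentially the same route as the paper: establish that $\tilde{G}_1$ is strongly endotactic via Proposition~\ref{prop:strongly_connected}, that $\tilde{G}_1\subseteq\tilde{G}_2$, and that all vertices of $\tilde{G}_2$ lie in the simplex with corners $3X_1,\ldots,3X_n$ spanned by the sources of $\tilde{G}_1$, then apply Proposition~\ref{prop:larger_endotactic}. You simply spell out the details (in particular why each $3X_i$ is a source of $\tilde{G}_1$, and how to handle possible $2X_i\to 3X_i$ reactions) more explicitly than the paper does.
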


\begin{proof}
Since $P(G_1)$ strongly connected and $G_1$ is a bimolecular autocatalytic network with repeated species, the convex hull of source vertices of $\tilde{G}_1$ has corners $3X_1, 3X_2,\ldots, 3X_n$ (as outlined in the proof of Proposition~\ref{prop:strongly_connected}). The fact that $G_2$ is a bimolecular autocatalytic network implies that the vertices of $\tilde{G}_2$ are contained in convex hull formed by $(3X_1, 3X_2,\ldots, 3X_n)$, i.e., the convex hull formed by the sources of $\tilde{G}_1$. Note that since $G_1\subseteq G_2$, we have $\tilde{G}_1\subseteq \tilde{G}_2$. In addition, $\tilde{G}_1$ is strongly endotactic by Proposition~\ref{prop:strongly_connected}. The result now follows from Proposition~\ref{prop:larger_endotactic}.
\end{proof}






In the next few examples, we consider autocatalytic recombinant networks involving different reactant species. This is similar in spirit to \emph{nonhomologous} genetic recombination, which involves the exchange of genetic material in the form of nucleotide sequences between two dissimilar DNA strands. Nonhomologous genetic recombination is used for repairing breaks in DNA strands. Below, we give a precise definition for such networks.

\begin{definition}
Consider a reaction network $\mathcal{G}$ with species $X_1,X_2,...,X_n$. Then $\mathcal{G}$ is said to be an \emph{autocatalytic recombination network of dimension $n$} if it consists of reactions of the form $X_i + X_{i+1} \rightarrow X_i + 2X_{i+1}$ and $X_i + X_{i+1} \rightarrow X_i + X_{i+1} + X_{i+2}$ for $i=1,2,..,n$, where $X_{n+1}=X_1$ and $X_{n+2}=X_2$(in the cyclic sense). 
\end{definition}

\subsection{\textbf{Autocatalytic recombination network of dimension three}} 

Consider the network $\GG_{\rm recomb(3)}$ (given in Table~\ref{mixed_hypercycle_3}). Most notably, species only interact with \emph{other} species. By Theorem~\ref{thm:relative_autocatalysis}, the dynamics of the relative populations of $\GG_{\rm recomb(3)}$ can be generated by the network $\tilde{\GG}_{\rm recomb(3)}$ shown in Table~\ref{mixed_hypercycle_3}. 

\begin{table}[h!]
\caption{Recombinant network of dimension three}
\centering
\begin{tabular}{|c|c|}
\hline
\rule{0pt}{20pt} $\GG_{\rm recomb(3)}$ & $\tilde{\GG}_{\rm recomb(3)}$  \\ [2ex]
\hline
$X_1 + X_2 \xrightarrow[]{k_1} X_1 + 2X_2$ & $2X_1 + X_2 \xrightarrow{k_1} X_1 + 2X_2 $  \\
																				  & $X_1 + X_2 + X_3 \xrightarrow{k_1} X_1 + 2X_2$ \\[1ex]
$X_2 + X_3\xrightarrow[]{k_2} X_2 + 2X_3$& $2X_2 + X_3 \xrightarrow{k_2} X_2 + 2X_3 $  \\
																		           & $X_1 + X_2 + X_3 \xrightarrow{k_2} X_2 + 2X_3 $ \\[1ex]
$X_3 + X_1 \xrightarrow[]{k_3} X_3 + 2X_1 $ & $2X_3 + X_1 \xrightarrow{k_3} X_3 + 2X_1$  \\
																					& $X_1 + X_2 + X_3 \xrightarrow{k_3}X_3 + 2X_1$\\[1ex]
$X_1 + X_2 \xrightarrow[]{k_4} X_1 + X_2 + X_3$ & $X_1 + 2X_2 \xrightarrow{k_4} X_1 + X_2 +X_3  $\\
																							& $2X_1 + X_2 \xrightarrow{k_4} X_1 + X_2 +X_3$\\[1ex]
$X_2 + X_3 \xrightarrow[]{k_5} X_1 + X_2 + X_3$ & $X_2 + 2X_3 \xrightarrow{k_5} X_1 + X_2 +X_3$ \\
 											   												& $2X_2 + X_3 \xrightarrow{k_5} X_1 + X_2 +X_3$\\[1ex]
$X_1 + X_3 \xrightarrow[]{k_6} X_1 + X_2 + X_3$ & $X_1 + 2X_3 \xrightarrow{k_6} X_1 + X_3 +X_3 $ \\ 
																 	                         & $2X_1 + X_3 \xrightarrow{k_6} X_1 + X_2 +X_3$ \\[1ex]
 \hline
\end{tabular}
\label{mixed_hypercycle_3}
\end{table}
The reaction network $\tilde{\GG}_{\rm recomb(3)}$ is illustrated in Figure~\ref{fig:non_lotka_volterra}.(a).

\begin{theorem}\label{cor:three_dim_recomb}
Any variable-$k$ dynamical system generated by $\tilde{\GG}_{\rm recomb(3)}$ is permanent.
\end{theorem}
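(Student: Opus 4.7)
The plan is to follow the template of Theorems~\ref{thm:4_strongly_endotactic} and~\ref{cor:recombination_dim_n}: show that $\tilde{\GG}_{\rm recomb(3)}$ is strongly endotactic, so that permanence of every variable-$k$ realization follows from~\cite{gopalkrishnan2014geometric}. I would verify strong endotacticity using Proposition~\ref{prop:endotactic_face_reaction}.

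First I would describe the convex hull of the source vertices. Every complex appearing in Table~\ref{mixed_hypercycle_3} has total degree $3$, so all sources lie on the simplex $\{x_1+x_2+x_3 = 3\}$. Reading off the table, the source vertices are the six boundary lattice points $(2,1,0), (1,2,0), (0,2,1), (0,1,2), (1,0,2), (2,0,1)$ together with the centroid $(1,1,1)$; crucially, the pure complexes $3X_1, 3X_2, 3X_3$ are \emph{not} sources. Thus the convex hull $H$ of the source vertices is the hexagon spanned by the six boundary points, with the centroid in its interior. Every target appearing in the table is itself one of the source vertices, so all vertices of $\tilde{\GG}_{\rm recomb(3)}$ lie in $H$, and the hypothesis of Proposition~\ref{prop:endotactic_face_reaction} is met.

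The remaining step is to check, for each of the twelve proper faces of $H$ (six corners and six edges), that some reaction has source on the face and target off. The key observation is that for every corner of $H$ the network contains a reaction from that corner to the centroid $(1,1,1)$: the $k_4$-, $k_5$-, and $k_6$-reactions in Table~\ref{mixed_hypercycle_3} supply exactly one such reaction at each corner. A corner-to-centroid reaction exits the corner itself \emph{and} exits both edges of $H$ incident to that corner, so a single pass around the hexagon discharges all twelve proper-face obligations at once, yielding strong endotacticity and hence permanence.

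The main point of difference from the proof of Theorem~\ref{cor:recombination_dim_n} is geometric: because the pure complexes $3X_i$ are absent from the source set, the convex hull of sources is a hexagon rather than a simplex with corners $3X_i$, so the line-segment extension argument used there does not transfer verbatim. The mild obstacle is simply recognizing the hexagonal structure and observing that the six centroid-bound reactions cover every face; once this is in hand, the face check is immediate and permanence follows.
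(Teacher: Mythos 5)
Your argument is correct, but it takes a genuinely different route from the paper. The paper does not verify strong endotacticity here; instead it observes that the mass-action system generated by $\tilde{\GG}_{\rm recomb(3)}$ (Figure~\ref{fig:non_lotka_volterra}.(a)) is dynamically equivalent to one generated by a weakly reversible network with a single linkage class (Figure~\ref{fig:non_lotka_volterra}.(b)), and then invokes the permanence result for variable-$k$ weakly reversible single-linkage-class systems from~\cite{boros2020permanence}. This detour is needed because $\tilde{\GG}_{\rm recomb(3)}$ as written in Table~\ref{mixed_hypercycle_3} is not itself weakly reversible (the three corner-to-corner reactions such as $2X_1+X_2\to X_1+2X_2$ have no return path), so the paper manufactures reversibility by re-realizing the right-hand side on a different graph. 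Your approach instead follows the template of Theorems~\ref{thm:4_strongly_endotactic} and~\ref{cor:recombination_dim_n}: you correctly identify that the source complexes are the six permutations of $(2,1,0)$ plus the centroid $(1,1,1)$, that their convex hull is a hexagon containing all targets, and that the six corner-to-centroid reactions (with rate constants $k_4,k_5,k_6$) discharge the face condition of Proposition~\ref{prop:endotactic_face_reaction} for every vertex and every edge of the hexagon, since the centroid lies in the relative interior. This yields strong endotacticity and hence permanence via~\cite{gopalkrishnan2014geometric}. Your route is arguably more uniform with the rest of Section~\ref{sec:examples} and avoids exhibiting an explicit dynamically equivalent realization; the paper's route, on the other hand, yields the extra structural information that the system is realizable as weakly reversible with one linkage class, which is the strategy it then reuses for $\tilde{\GG}_{\rm recomb(4)}$, $\tilde{\GG}_{\rm recomb(5)}$, and $\tilde{\GG}_{\rm recomb(6)}$. (Minor point: in your face count you should make explicit that the only nonempty proper faces of the hexagon are its six vertices and six edges, and that the centroid lies on none of them because it has no coordinate equal to $0$ or $2$; you assert this but it is the one geometric fact the whole argument rests on.)
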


\begin{proof}
Any dynamical system generated by the network in Figure~\ref{fig:non_lotka_volterra}.(a) can be obtained from the dynamical system generated by the network in Figure~\ref{fig:non_lotka_volterra}.(b) (which is weakly reversible and possesses a single linkage class) if we choose the rate constants as shown in Figure~\ref{fig:non_lotka_volterra}. It follows from~\cite{boros2020permanence} that any variable-$k$ dynamical system generated by $\tilde{\GG}_{\rm recomb(3)}$ is permanent .
\end{proof}

\begin{figure*}[h!]
\centering
\includegraphics[scale=0.4]{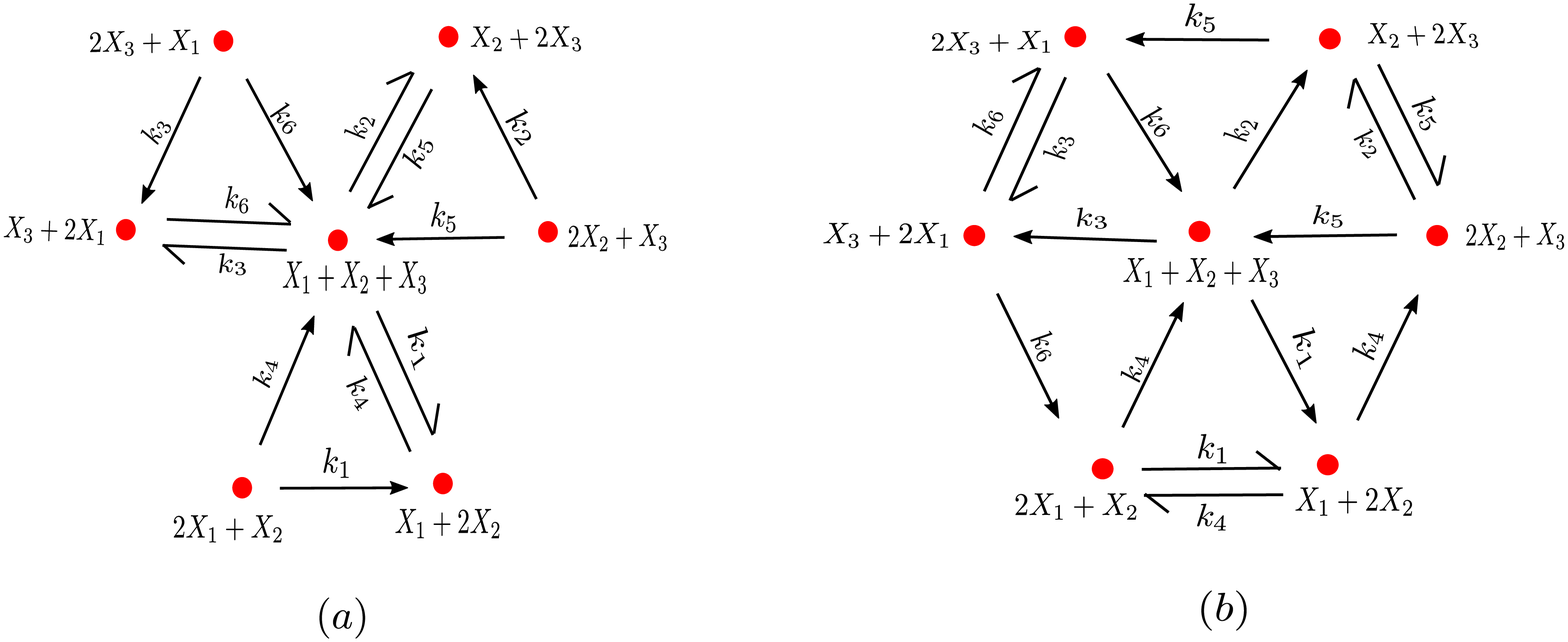}
\caption{(a) Reaction network that generates the dynamics of relative populations of the recombination network of dimension three given by $\GG_{\rm recomb(3)}$ given by $\tilde{\GG}_{\rm recomb(3)}$. (b) Weakly reversible reaction network with single linkage class that generates the same dynamics as that given by the network in Figure~\ref{fig:non_lotka_volterra}.(a), if we choose rate constants as shown above.}
\label{fig:non_lotka_volterra}
\end{figure*}

\subsection{\textbf{Autocatalytic recombination network of dimension four, five and six}}\label{sec:recomb}

In this subsection, we analyze autocatalytic recombination networks of dimension four, five and six. First, consider the  autocatalytic recombination network of dimension four given by $\GG_{\rm recomb(4)}$ as depicted in Table~\ref{mixed_hypercycle_4}. The corresponding network for relative populations is given by $\tilde{\GG}_{\rm recomb(4)}$ and is also given in Table~\ref{mixed_hypercycle_4}. In Figure~\ref{fig:higher_hypercycle}.(a), we illustrate the reaction network corresponding to the relative population of species for the following subset of reactions in $\GG_{\rm recomb(4)}$: reactions with sources $X_1 + X_2$ and $X_2 + X_3$.

\begin{table}[h!]
\caption{Recombinant network of dimension four}
\centering
\begin{tabular}{|c|c|}
 \hline
\rule{0pt}{20pt} $\GG_{\rm recomb(4)}$  & $\tilde{\GG}_{\rm recomb(4)}$  \\
 [2 ex]\hline
$X_1 + X_2 \xrightarrow[]{k_1} X_1 + 2X_2$ & $2X_1 + X_2 \xrightarrow{k_1} X_1 + 2X_2 $  \\
																				  & $X_1 + X_2 + X_3 \xrightarrow{k_1} X_1 + 2X_2$ \\
																				  & $X_1 + X_2 + X_4 \xrightarrow{k_1} X_1 + 2X_2$ \\[1ex]
																				  
$X_2 + X_3\xrightarrow[]{k_2} X_2 + 2X_3$& $2X_2 + X_3 \xrightarrow{k_2} X_2 + 2X_3 $  \\
																		           & $X_1 + X_2 + X_3 \xrightarrow{k_2} X_2 + 2X_3 $ \\
																		           & $X_2 + X_3 + X_4 \xrightarrow{k_2} X_2 + 2X_3 $ \\[1ex]
																		           
$X_3 + X_4 \xrightarrow[]{k_3} X_3 + 2X_4 $ & $2X_3 + X_4 \xrightarrow{k_3} X_3 + 2X_4$  \\
																					& $X_1 + X_3 + X_4 \xrightarrow{k_3}X_3 + 2X_4$\\
																					& $X_2 + X_3 + X_4 \xrightarrow{k_3}X_3 + 2X_4$\\[1ex]
																				
$X_4 + X_1 \xrightarrow[]{k_4} X_4 + 2X_1 $ & $2X_4 + X_1 \xrightarrow{k_4} X_4 + 2X_1$  \\
																					& $X_1 + X_2 + X_4 \xrightarrow{k_4} X_4 + 2X_1$\\																														& $X_1 + X_3 + X_4 \xrightarrow{k_4} X_4 + 2X_1$\\	[1ex]									
$X_1 + X_2 \xrightarrow[]{k_5} X_1 + X_2 + X_3$ & $X_1 + 2X_2 \xrightarrow{k_5} X_1 + X_2 +X_3  $\\
																							& $2X_1 + X_2 \xrightarrow{k_5} X_1 + X_2 +X_3$\\
																							& $X_1 + X_2 + X_4\xrightarrow{k_5} X_1 + X_2 +X_3$\\[1ex]
																							
$X_2 + X_3 \xrightarrow[]{k_6} X_2 + X_3 + X_4$ & $X_2 + 2X_3 \xrightarrow{k_6} X_2 + X_3 +X_4$ \\
 											   												& $2X_2 + X_3 \xrightarrow{k_6} X_2 + X_3 +X_4$\\
 											   									   			& $X_1 + X_2 + X_3 \xrightarrow{k_6} X_2 + X_3 +X_4$\\ 		[1ex]								   												
 											   												
$X_3 + X_4 \xrightarrow[]{k_7} X_3 + X_4 + X_1$ & $X_3 + 2X_4 \xrightarrow{k_7} X_3 + X_4 +X_1 $ \\ 
																 	                          & $2X_3 + X_4 \xrightarrow{k_7} X_3 + X_4 +X_1 $  \\
																 	                          & $X_2 + X_3 + X_4 \xrightarrow{k_7} X_3 + X_4 +X_1 $  \\[1ex]
																 	                          
$X_4 + X_1 \xrightarrow[]{k_8} X_4 + X_1 + X_2$ & $X_4 + 2X_1 \xrightarrow{k_8} X_4 + X_1 + X_2 $ \\ 
																 	                          & $2X_4 + X_1 \xrightarrow{k_8} X_4 + X_1 + X_2$  \\
																 	                          & $X_1 + X_3 + X_4 \xrightarrow{k_8} X_4 + X_1 + X_2$  \\[1ex]																 	                          
																 	                          
 \hline
\end{tabular}
\label{mixed_hypercycle_4}
\end{table}

\begin{figure*}[h!]
\centering
\includegraphics[scale=0.5]{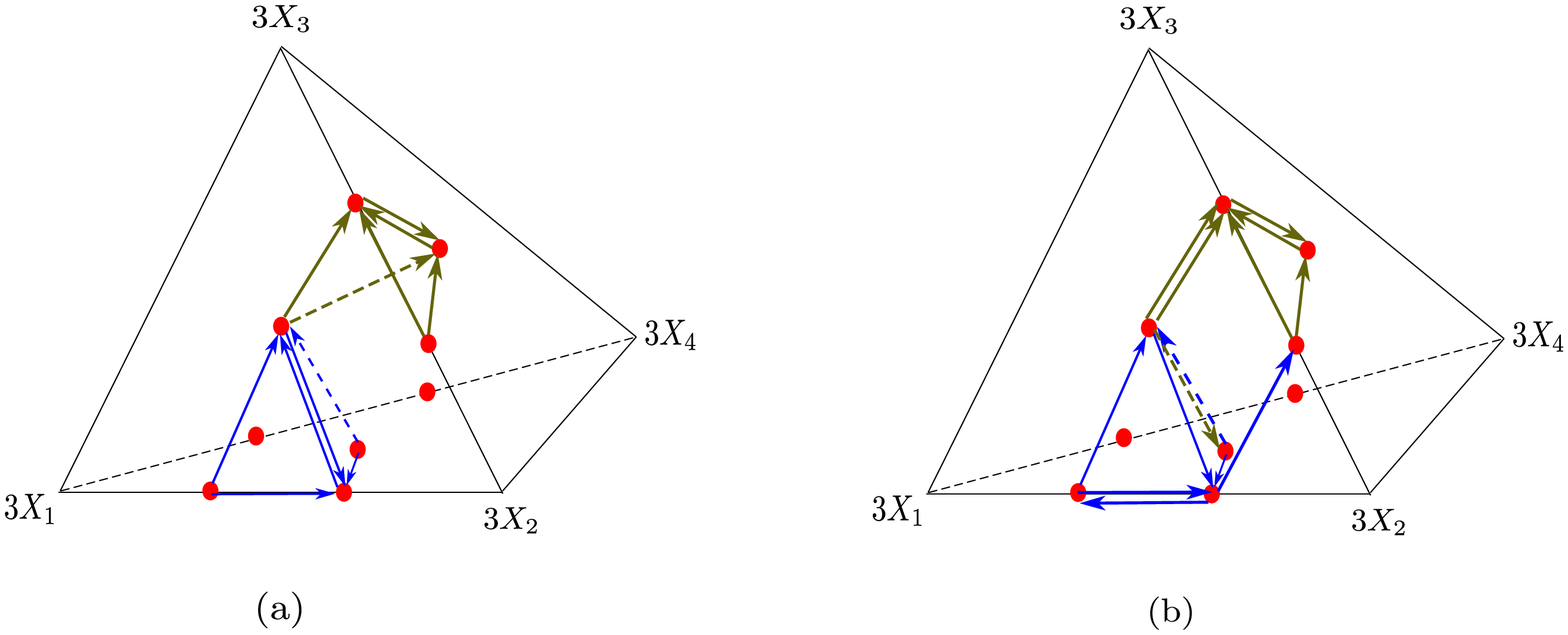}
\caption{(a) A network illustrating a subset of reactions in $\tilde{\GG}_{\rm recomb(4)}$. (b) A reaction network that is dynamically equivalent to the network in (a). The reaction $X_1+2X_2\rightarrow X_1 + X_2 + X_3$ (marked with blue in network (a)) is split as $X_1+2X_2\rightarrow 2X_1 + X_2$ and $X_1+2X_2\rightarrow X_3 + 2X_2$ (marked with blue  in network (b)). The reaction $X_1+ X_2 + X_3\rightarrow X_2 + X_3 + X_4$ (marked with green in network (a)) is split as $X_1+X_2+X_3\rightarrow X_2 + 2X_3$ and $X_1+ X_2 + X_3\rightarrow X_1 + X_2 + X_4$ (marked with green in network (b))}. 
\label{fig:higher_hypercycle}
\end{figure*} 

In what follows, we show that the reaction networks corresponding to the relative populations of autocatalytic recombinant networks of dimension four, five and six given by $\tilde{\GG}_{\rm recomb(4)},\tilde{\GG}_{\rm recomb(5)}$ and $\tilde{\GG}_{\rm recomb(6)}$ can be made dynamically equivalent to a weakly reversible reaction network with a single linkage class. As a consequence, any variable-$k$ dynamical system generated by these networks in permanent~\cite{boros2020permanence}.

\begin{proposition}\label{prop:G_4_WR}
The reaction network $\tilde{\GG}_{\rm recomb(4)}$ can be made dynamically equivalent to a weakly reversible reaction network with a single linkage class.
\end{proposition}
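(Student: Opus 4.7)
The strategy is to exhibit an explicit weakly reversible reaction network $\widehat{\GG}$ with a single linkage class and then verify that $\widehat{\GG}$ is dynamically equivalent to $\tilde{\GG}_{\rm recomb(4)}$ using the per-source-complex reformulation of Definition~\ref{defn:dynamical_equivalence}. The construction mimics the splitting trick already illustrated for the three-dimensional case in Figure~\ref{fig:higher_hypercycle}: a ``long'' reaction $\s \to \s'$ is replaced by two reactions $\s \to \s_a$ and $\s \to \s_b$ emanating from the same source $\s$ whose flux vectors $(\s_a - \s)$ and $(\s_b - \s)$ sum to $(\s' - \s)$; both copies inherit the original rate constant, so the mass-action contribution at $\s$ is unchanged.

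\textbf{Step 1: locate all complexes on a simplex.} Every complex appearing in $\tilde{\GG}_{\rm recomb(4)}$ has total stoichiometric weight $3$, so all sources and targets lie on the $3$-simplex $\Delta$ with vertices $\{3X_1, 3X_2, 3X_3, 3X_4\}$. Complexes of the form $X_i + 2X_j$ lie on edges of $\Delta$, and complexes of the form $X_i + X_j + X_k$ lie on (open) $2$-faces of $\Delta$. This organization is what makes the splitting strategy combinatorially tractable.

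\textbf{Step 2: split long reactions into edge-hops.} For each reaction in $\tilde{\GG}_{\rm recomb(4)}$ whose flux vector crosses the interior of a face of $\Delta$, I would replace it (as in Figure~\ref{fig:higher_hypercycle}) with two reactions from the same source whose flux vectors lie along the edges bounding that face and sum to the original flux. Concretely, I would handle each source complex in turn, using the fact that every flux vector $\s' - \s$ in $\tilde{\GG}_{\rm recomb(4)}$ can be written as a positive integer sum of two of the elementary edge-vectors $X_a - X_b$ of $\Delta$. After all splits, every reaction goes between complexes that differ by a single elementary edge-vector, so the reactions of $\widehat{\GG}$ form a graph embedded in the $1$-skeleton of $\Delta$ plus the face-centroids.

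\textbf{Step 3: verify weak reversibility and a single linkage class.} I would then check by inspection of the (finite) list of split reactions that the resulting edges in $\widehat{\GG}$ connect every complex into one connected component and that each edge lies on a directed cycle. The cyclic structure $X_1 \to X_2 \to X_3 \to X_4 \to X_1$ inherited from $\GG_{\rm recomb(4)}$ provides the long cycles traversing the boundary of $\Delta$; the ``recombination'' reactions $X_i + X_{i+1} \to X_i + X_{i+1} + X_{i+2}$ contribute the cross-face links needed to put every face-centroid into the same linkage class.

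\textbf{Step 4: verify dynamical equivalence.} For each source complex $\s_0$ appearing in $\tilde{\GG}_{\rm recomb(4)}$, I would compute $\sum_{\s_0 \to \s'} k_{\s_0\to \s'}(\s' - \s_0)$ in both networks and confirm the sums agree; by construction of the splits, this amounts to the identity $(\s_a - \s_0) + (\s_b - \s_0) = (\s' - \s_0)$ applied reaction by reaction. The main obstacle is the bookkeeping in Step 2: when several reactions of $\tilde{\GG}_{\rm recomb(4)}$ share a source (for instance $X_1+X_2+X_3$ is the source of reactions with rates $k_1, k_2, k_6$), the splits must be chosen jointly so that the resulting edges at that source complex close up into the cycles demanded by Step 3. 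I expect this can be carried out by reducing modulo the cyclic $\ZZ/4$ symmetry of the network and then explicitly tabulating the splits at a single representative source.
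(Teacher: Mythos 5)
Your overall strategy is the same as the paper's: split selected reactions into pairs with the same source whose flux vectors sum to the original (preserving the per-source sums of Definition~\ref{defn:dynamical_equivalence}), and use the cyclic structure of $\GG_{\rm recomb(4)}$ to close everything into one weakly reversible linkage class. However, Step~2 as stated has a genuine gap: your selection criterion --- split the reactions ``whose flux vector crosses the interior of a face of $\Delta$'' --- selects nothing, because \emph{every} reaction of $\tilde{\GG}_{\rm recomb(4)}$ already has an elementary flux vector of the form $X_a-X_b$ (e.g.\ $2X_1+X_2\to X_1+2X_2$ has flux $X_2-X_1$, and $X_1+X_2+X_4\to X_1+2X_2$ has flux $X_2-X_4$). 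Taken literally, your $\widehat{\GG}$ is just $\tilde{\GG}_{\rm recomb(4)}$ itself, which is not weakly reversible: the four complexes $2X_1+X_2$, $2X_2+X_3$, $2X_3+X_4$, $2X_4+X_1$ occur only as sources and never as targets, so they cannot lie on any directed cycle. The obstruction is not ``long'' reactions; it is the absence of incoming edges at these four vertices.

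The fix (which is what the paper does, and what the blue edges in Figure~\ref{fig:higher_hypercycle} depict) is to split the reaction $X_1+2X_2\to X_1+X_2+X_3$ into $X_1+2X_2\to 2X_1+X_2$ and $X_1+2X_2\to 2X_2+X_3$, legitimate since $(X_1-X_2)+(X_3-X_1)=X_3-X_2$; a single such split manufactures incoming edges at two of the four orphaned complexes, and applying it for each $i$ under the $\ZZ/4$ symmetry handles all of them. No splitting is needed for the centroids: $X_1+X_2+X_4$ and its rotations already receive incoming edges because the $k_5,\dots,k_8$ reactions of $\tilde{\GG}_{\rm recomb(4)}$ contain the directed $4$-cycle $X_1+X_2+X_4\to X_1+X_2+X_3\to X_2+X_3+X_4\to X_1+X_3+X_4\to X_1+X_2+X_4$, which simultaneously ties all the centroids (and hence, via the reversible pairs $X_i+2X_{i+1}\leftrightarrows X_i+X_{i+1}+X_{i+2}$, the whole graph) into a single linkage class. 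Your Steps~1, 3 and 4 are fine once the splits are chosen this way; it is only the rule for choosing them in Step~2 that must be replaced by ``identify the source complexes with no incoming edge and split an existing reaction out of an adjacent target complex so as to point back at them.''
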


\begin{proof}
For the reactions $X_1 + X_2 \rightarrow X_1 + 2X_2$ and $X_1 + X_2 \rightarrow X_1 + X_2 + X_3$ in $\GG_{\rm recomb(4)}$, Theorem~\ref{thm:relative_autocatalysis} gives us the following subnetwork in $\tilde{\GG}_{\rm recomb(4)}$: 
    \begin{center}
    \begin{tikzpicture}
        \node (x2y) at (0,0) [left] {$X_1 + 2X_2$};
        \node (xyz) at (2,0) [right] {$X_1+X_2+X_3$};
        \node (xyw) at (1,1.5) {$X_1+X_2+X_4$};
        \node (2xy) at (1,-1.5) {$2X_1+X_2$};
        \draw [-{Stealth}, thick]  (xyw)--(x2y); 
        \draw [-{Stealth}, thick]  (xyw)--(xyz);
        \draw [-{Stealth}, thick]  (2xy)--(x2y); 
        \draw [-{Stealth}, thick]  (2xy)--(xyz);
        \draw [-{Stealth[left]}, thick, transform canvas={yshift=1pt}]  (x2y)--(xyz);
        \draw [-{Stealth[left]}, thick, transform canvas={yshift=-1pt}]  (xyz)--(x2y);
    \node at (5,0) [right] {};
    \end{tikzpicture} 
    \end{center}
whose geometric embedding is shown in Figure~\ref{fig:higher_hypercycle}.(a). To make this subnetwork dynamically equivalent to a weakly reversible reaction network, it therefore suffices to find appropriate reactions with targets $X_1+ X_2 + X_4$ and $2X_1 + X_2$. We can accomplish this using the following:
\begin{enumerate}[(i)]
\item \emph{Reaction with target $2X_1 + X_2$}: We split the reaction $X_1 + 2X_2 \rightarrow X_1 + X_2 + X_3$ into
\begin{eqnarray*}
\begin{split}
X_1 + 2X_2 &\rightarrow 2X_1 + X_2\ \rm{and} \\
X_1 + 2X_2 &\rightarrow 2X_2 + X_3,
\end{split}
\end{eqnarray*} 
as shown in Figure~\ref{fig:higher_hypercycle}.(b), since $(0,-1,1,0)^T = (1,-1,0,0)^T + (-1,0,1,0)^T$.
\item\label{seq:4} \emph{Reaction with target $X_1+ X_2 + X_4$}: This can done with the following sequence of reactions
\begin{eqnarray*}
\begin{split}
X_1+ X_2 + X_4 &\rightarrow X_1+ X_2 + X_3 \\
X_1+ X_2 + X_3 &\rightarrow X_2+ X_3 + X_4 \\
X_2+ X_3 + X_4 &\rightarrow X_3+ X_4 + X_1 \\
X_3+ X_4 + X_1 &\rightarrow X_1+ X_2 + X_4,
\end{split}
\end{eqnarray*}
which are known to exist in $\tilde{\GG}_{\rm recomb(4)}$ from Table~\ref{mixed_hypercycle_4}.
\end{enumerate}
Repeating this procedure for the remaining network, we obtain a weakly reversible reaction network. In addition, the sequence of reactions described in~\ref{seq:4} ensures that this network consists of a single linkage class. Therefore, the reaction network $\tilde{\GG}_{\rm recomb(4)}$ can be made dynamically equivalent to a weakly reversible reaction network with a single linkage class. 
\end{proof}

\begin{proposition}\label{prop:G_5_WR}
The reaction network $\tilde{\GG}_{\rm recomb(5)}$ can be made dynamically equivalent to a weakly reversible reaction network with a single linkage class.
\end{proposition}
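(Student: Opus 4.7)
The plan is to follow the blueprint of Proposition~\ref{prop:G_4_WR} by exhibiting explicit reaction splittings that make $\tilde{\GG}_{\rm recomb(5)}$ dynamically equivalent to a weakly reversible network with a single linkage class. Theorem~\ref{thm:relative_autocatalysis} tells us that for each pair of reactions $X_i + X_{i+1} \to X_i + 2X_{i+1}$ and $X_i + X_{i+1} \to X_i + X_{i+1} + X_{i+2}$ in $\GG_{\rm recomb(5)}$, the subnetwork produced in $\tilde{\GG}_{\rm recomb(5)}$ has the five source complexes $2X_i + X_{i+1}$, $X_i + 2X_{i+1}$, and $X_i + X_{i+1} + X_j$ for $j \in \{i+2, i+3, i-1\}$ (indices mod $5$), while only $X_i + 2X_{i+1}$ and the canonical triple $X_i + X_{i+1} + X_{i+2}$ arise as targets of original reactions. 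The task is to use dynamical equivalence so that every source also becomes a target and all complexes lie in one linkage class.

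First, as in Proposition~\ref{prop:G_4_WR}, I split each reaction $X_i + 2X_{i+1} \to X_i + X_{i+1} + X_{i+2}$ as $X_i + 2X_{i+1} \to 2X_i + X_{i+1}$ and $X_i + 2X_{i+1} \to 2X_{i+1} + X_{i+2}$ via the identity $(0,-1,1,0,0) = (1,-1,0,0,0) + (-1,0,1,0,0)$ (with cyclic reindexing at each $i$). Combined with the existing lift $2X_i + X_{i+1} \to X_i + 2X_{i+1}$, this yields the bidirectional edge $X_i + 2X_{i+1} \leftrightarrow 2X_i + X_{i+1}$ and makes each $2X_i + X_{i+1}$ a target. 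Second, the canonical $5$-cycle $X_1+X_2+X_3 \to X_2+X_3+X_4 \to X_3+X_4+X_5 \to X_4+X_5+X_1 \to X_5+X_1+X_2 \to X_1+X_2+X_3$ already lies in $\tilde{\GG}_{\rm recomb(5)}$, because each arrow is the lift of $X_{j+1}+X_{j+2} \to X_{j+1}+X_{j+2}+X_{j+3}$ with the preceding species $X_j$ as catalyst. Each source of the form $X_i + X_{i+1} + X_{i-1}$ is itself the canonical triple of index $i-1$, and so sits on this $5$-cycle.

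The essentially new ingredient is handling the five non-canonical triples $X_i + X_{i+1} + X_{i+3}$ (for example $X_1 + X_2 + X_4$), which are sources of $\tilde{\GG}_{\rm recomb(5)}$ but not targets of any original reaction. For the outgoing flux I plan to use splittings such as $(0,0,1,-1,0) = (-1,0,1,0,0) + (1,0,0,-1,0)$, which breaks $X_1+X_2+X_4 \to X_1+X_2+X_3$ into $X_1+X_2+X_4 \to X_2+X_3+X_4$ (landing on the canonical $5$-cycle) and $X_1+X_2+X_4 \to 2X_1+X_2$ (landing on the bidirectional edge from the first step); performing the cyclic analogue at each non-canonical triple ensures outgoing edges into the weakly reversible structure already built. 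To manufacture an incoming edge to each non-canonical triple I plan to exhibit a splitting of an existing reaction at a carefully chosen source complex such that one summand of the vector decomposition points into the non-canonical triple and the remaining summands still correspond to valid non-negative target complexes.

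The main obstacle lies in producing these incoming edges. In Proposition~\ref{prop:G_4_WR}, the analogous incoming edge $X_3+X_4+X_1 \to X_1+X_2+X_4$ is already present in $\tilde{\GG}_{\rm recomb(4)}$ as the lift of the canonical recombination $X_4+X_1 \to X_4+X_1+X_2$ with $p=3$, a reaction which exists because in dimension $4$ the pair $(4,1)$ is consecutive. In dimension $5$ the pair $(4,1)$ is no longer consecutive, so the analogous lifted reaction is not available and the incoming edge must be constructed by splitting. I expect this step to require a case analysis using the cyclic symmetry, producing for each non-canonical triple a vector decomposition of an existing reaction direction whose summands correspond to valid target complexes with the necessary species content. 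Once all five non-canonical triples have incoming and outgoing edges within the same linkage class as the canonical $5$-cycle and the bidirectional edges from the first step, the resulting network is weakly reversible with a single linkage class and dynamically equivalent to $\tilde{\GG}_{\rm recomb(5)}$, and permanence follows from~\cite{boros2020permanence}.
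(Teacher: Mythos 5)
Your overall strategy is the same as the paper's: lift the reactions at each source $X_i+X_{i+1}$, note that $X_1+2X_2$ and $X_1+X_2+X_3$ already form a reversible pair, create a target $2X_1+X_2$ by splitting $X_1+2X_2 \to X_1+X_2+X_3$ via $(0,-1,1,0,0)=(1,-1,0,0,0)+(-1,0,1,0,0)$, and observe that $X_1+X_2+X_5=X_5+X_1+X_2$ already receives an edge because it sits on the canonical $5$-cycle $X_1+X_2+X_3 \to X_2+X_3+X_4 \to \cdots \to X_5+X_1+X_2 \to X_1+X_2+X_3$ of lifted reactions. All of that agrees with items (i) and (iii) of the paper's proof.

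However, you explicitly leave open the one step that is genuinely new in dimension five: producing an \emph{incoming} edge to the non-canonical triple $X_1+X_2+X_4$ (and its cyclic analogues). You correctly diagnose why the dimension-four trick fails (there the edge $X_3+X_4+X_1 \to X_1+X_2+X_4$ is simply a lifted reaction, since $(4,1)$ is a consecutive pair when $n=4$), and you state that some splitting of an existing reaction must supply this edge, but you do not exhibit one; as written this is a plan, not a proof. The paper closes exactly this gap with a single explicit decomposition: split the lifted reaction $X_1+X_2+X_3 \to X_2+X_3+X_4$ into $X_1+X_2+X_3 \to X_2+2X_3$ and $X_1+X_2+X_3 \to X_1+X_2+X_4$, using $(-1,0,0,1,0)=(-1,0,1,0,0)+(0,0,-1,1,0)$; both summands land on valid nonnegative complexes, and the second one is precisely the missing incoming edge. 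With that decomposition in hand (applied cyclically at each index), your argument goes through; without it, the weak reversibility of the resulting network is not established. A secondary remark: your additional splitting of the outgoing reaction $X_1+X_2+X_4 \to X_1+X_2+X_3$ is harmless but unnecessary, since $X_1+X_2+X_4$ already has outgoing edges into the main component.
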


\begin{proof}
For the reactions $X_1 + X_2 \rightarrow X_1 + 2X_2$ and $X_1 + X_2 \rightarrow X_1 + X_2 + X_3$ in $\GG_{\rm recomb(5)}$, Theorem~\ref{thm:relative_autocatalysis} gives us the following subnetwork in $\tilde{\GG}_{\rm recomb(5)}$: 
    
    \begin{center}
    \begin{tikzpicture}
        \node (l0) at (0,0) [left] {$X_1 + 2X_2$};
        \node (r0) at (2,0) [right] {$X_1+X_2+X_3$};
        \node (l1) at (0,1.5) [left] {$X_1+X_2+X_4$};
        \node (r1) at (2,1.5) [right] {$X_1+X_2+X_5$};
        \node (2xy) at (1,-1.5) {$2X_1+X_2$};
        \draw [-{Stealth}, thick]  ([xshift=10pt]l1.south)--(l0); 
        \draw [-{Stealth}, thick]  (l1)--(r0);
        \draw [-{Stealth}, thick]  (r1)--(l0); 
        \draw [-{Stealth}, thick]  (r1)--(r0);
        \draw [-{Stealth}, thick]  (2xy)--(l0); 
        \draw [-{Stealth}, thick]  (2xy)--(r0);
        \draw [-{Stealth[left]}, thick, transform canvas={yshift=1pt}]  (l0)--(r0);
        \draw [-{Stealth[left]}, thick, transform canvas={yshift=-1pt}]  (r0)--(l0);
    \node at (5,0) [right] {};
    \end{tikzpicture} 
    \end{center}
    
To make this subnetwork dynamically equivalent to weakly reversible single linkage class, it therefore suffices to find appropriate reactions with targets $X_1+ X_2 + X_4, 2X_1 + X_2$ and $X_1+ X_2 + X_5$. We can accomplish this using the following:
\begin{enumerate}[(i)]
\item \emph{Reaction with target $2X_1 + X_2$}: We split the reaction $X_1 + 2X_2 \rightarrow X_1 + X_2 + X_3$ into
\begin{eqnarray*}
\begin{split}
X_1 + 2X_2 &\rightarrow 2X_1 + X_2\ \rm{and} \\
X_1 + 2X_2 &\rightarrow 2X_2 + X_3,
\end{split}
\end{eqnarray*} 
since $(0,-1,1,0,0)^T = (1,-1,0,0,0)^T + (-1,0,1,0,0)^T$.
\item \emph{Reaction with target $X_1+ X_2 + X_4$}: We split the reaction $X_1+ X_2 + X_3 \rightarrow X_2+ X_3 + X_4$ into 
\begin{eqnarray*}
\begin{split}
X_1+ X_2 + X_3 &\rightarrow X_2+ 2X_3\ \rm{and} \\
X_1+ X_2 + X_3 &\rightarrow X_1 + X_2+ X_4,
\end{split}
\end{eqnarray*}
as shown in Figure~\ref{fig:higher_hypercycle}.(b), since $(-1,0,0,1,0)^T = (-1,0,1,0,0)^T + (0,0,-1,1,0)^T$.
\item\label{seq:5} \emph{Reaction with target $X_1+ X_2 + X_5$}: This can done with the following sequence of reactions
\begin{eqnarray*}
\begin{split}
X_1+ X_2 + X_5 &\rightarrow X_1+ X_2 + X_3\\
X_1+ X_2 + X_3 &\rightarrow X_2+ X_3 + X_4 \\
X_2+ X_3 + X_4 &\rightarrow X_3+ X_4 + X_5 \\
X_3+ X_4 + X_5 &\rightarrow X_4+ X_5 + X_1 \\
X_4+ X_5 + X_1 &\rightarrow X_1+ X_2 + X_5,
\end{split}
\end{eqnarray*}
which are known to exist in $\tilde{\GG}_{\rm recomb(5)}$.
\end{enumerate}
Repeating this procedure for the remaining network, we obtain a weakly reversible reaction network. In addition, the sequence of reactions described in~\ref{seq:5} ensures that this network consists of a single linkage class. Therefore, the reaction network $\tilde{\GG}_{\rm recomb(5)}$ can be made dynamically equivalent to a weakly reversible reaction network with a single linkage class. 

\end{proof}

\begin{proposition}\label{prop:G_6_WR}
The reaction network $\tilde{\GG}_{\rm recomb(6)}$ can be made dynamically equivalent to a weakly reversible reaction network with a single linkage class.
\end{proposition}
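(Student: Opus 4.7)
The plan is to follow the same pattern used in Propositions~\ref{prop:G_4_WR} and~\ref{prop:G_5_WR}: isolate the subnetwork of $\tilde{\GG}_{\rm recomb(6)}$ associated with the pair of reactions $X_1+X_2 \to X_1+2X_2$ and $X_1+X_2 \to X_1+X_2+X_3$, supply an incoming reaction to each source by either splitting an existing reaction or closing a chain, and then invoke the cyclic symmetry of $\GG_{\rm recomb(6)}$ to handle the remaining five pairs. By Theorem~\ref{thm:relative_autocatalysis}, this subnetwork has sources $X_p+X_1+X_2$ for every $p\in\{1,\ldots,6\}$, all emitting arrows into the two targets $X_1+2X_2$ and $X_1+X_2+X_3$, plus a bidirectional pair of arrows between those two targets. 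The four sources lacking an incoming reaction are $2X_1+X_2$, $X_1+X_2+X_4$, $X_1+X_2+X_5$, and $X_1+X_2+X_6$.

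For $2X_1+X_2$ and $X_1+X_2+X_4$ I would reuse the local splittings that worked in dimensions four and five: split $X_1+2X_2 \to X_1+X_2+X_3$ into $X_1+2X_2 \to 2X_1+X_2$ and $X_1+2X_2 \to 2X_2+X_3$, and split $X_1+X_2+X_3 \to X_2+X_3+X_4$ into $X_1+X_2+X_3 \to X_2+2X_3$ and $X_1+X_2+X_3 \to X_1+X_2+X_4$. For $X_1+X_2+X_6$ I would use the natural six-step chain
\begin{align*}
X_1+X_2+X_6 &\to X_1+X_2+X_3 \to X_2+X_3+X_4 \to X_3+X_4+X_5 \\
&\to X_4+X_5+X_6 \to X_1+X_5+X_6 \to X_1+X_2+X_6,
\end{align*}
each of whose arrows exists in $\tilde{\GG}_{\rm recomb(6)}$ by Theorem~\ref{thm:relative_autocatalysis}.

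The new difficulty in dimension six is $X_1+X_2+X_5$: no chain of existing reactions in $\tilde{\GG}_{\rm recomb(6)}$ returns to this vertex, because the reaction $X_5+X_1 \to X_5+X_1+X_2$ that closed the dimension-five chain is absent here (in $\GG_{\rm recomb(6)}$, $X_5$ and $X_1$ are no longer cyclic neighbors). To create an incoming arrow to $X_1+X_2+X_5$ I would introduce a \emph{cross-subnetwork} split: take the reaction $X_2+X_5+X_6 \to X_1+X_5+X_6$ (obtained by extending $X_5+X_6 \to X_5+X_6+X_1$ with $p=2$) and split it as $X_2+X_5+X_6 \to X_1+X_2+X_5$ and $X_2+X_5+X_6 \to X_5+2X_6$. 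This is a valid splitting because $(X_1+X_2+X_5)+(X_5+2X_6) = (X_2+X_5+X_6)+(X_1+X_5+X_6)$ and both new targets have non-negative integer coordinates.

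The last step is to apply this three-ingredient recipe symmetrically to each of the remaining five pairs $(X_i,X_{i+1})$, and to verify that the resulting network is weakly reversible with a single linkage class; connectivity is provided by the six-cycle of triplets $X_i+X_{i+1}+X_{i+2}$ that threads through every subnetwork. The main obstacle I anticipate is verifying weak reversibility at the auxiliary sources such as $X_2+X_5+X_6$ that appear in the cross-subnetwork splits: these are themselves ``distant'' sources in another subnetwork, and the needed incoming arrow must be produced by the symmetric cross-split associated to the pair $(X_5,X_6)$, so the arguments for different pairs have to be run consistently. As in Propositions~\ref{prop:G_4_WR} and~\ref{prop:G_5_WR}, I would use partial-rate splittings throughout, so that the direct reactions needed by the six-chains remain present in the equivalent network alongside the newly introduced split reactions.
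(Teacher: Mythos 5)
Your proof follows the paper's argument essentially verbatim: the same two local splits to supply $2X_1+X_2$ and $X_1+X_2+X_4$, the same six-step cycle through the triplets $X_i+X_{i+1}+X_{i+2}$ to supply $X_1+X_2+X_6$, and a splitting to supply $X_1+X_2+X_5$, all propagated by cyclic symmetry. The only difference is the donor reaction for that last split --- you split $X_2+X_5+X_6\to X_1+X_5+X_6$ into targets $X_1+X_2+X_5$ and $X_5+2X_6$, whereas the paper splits $X_5+X_6+X_1\to X_6+X_1+X_2$ into targets $X_1+X_2+X_5$ and $X_1+2X_6$; both choices are valid, and your explicit remark that partial-rate splittings must be used so that the reactions needed in the six-step chain remain present is a consistency point the paper leaves implicit.
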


\begin{proof}
For the reactions $X_1 + X_2 \rightarrow X_1 + 2X_2$ and $X_1 + X_2 \rightarrow X_1 + X_2 + X_3$ in $\GG_{\rm recomb(6)}$, Theorem~\ref{thm:relative_autocatalysis} gives us the following subnetwork in $\tilde{\GG}_{\rm recomb(6)}$: 
    
    \begin{center}
    \begin{tikzpicture}
        \node (l0) at (0,0) [left] {$X_1 + 2X_2$};
        \node (r0) at (2,0) [right] {$X_1+X_2+X_3$};
        \node (l1) at (0,1.5) [left] {$X_1+X_2+X_4$};
        \node (r1) at (2,1.5) [right] {$X_1+X_2+X_5$};
        \node (2xy) at (0,-1.5) [left] {$2X_1+X_2$};
        \node (r2) at (2,-1.5) [right] {$X_1+X_2+X_6$};
        \draw [-{Stealth}, thick]  ([xshift=10pt]l1.south)--(l0); 
        \draw [-{Stealth}, thick]  (l1)--(r0);
        \draw [-{Stealth}, thick]  (r1)--(l0); 
        \draw [-{Stealth}, thick]  (r1)--(r0);
        \draw [-{Stealth}, thick]  (r2)--(l0); 
        \draw [-{Stealth}, thick]  (r2)--(r0);
        \draw [-{Stealth}, thick]  (2xy)--(l0); 
        \draw [-{Stealth}, thick]  (2xy)--(r0);
        \draw [-{Stealth[left]}, thick, transform canvas={yshift=1pt}]  (l0)--(r0);
        \draw [-{Stealth[left]}, thick, transform canvas={yshift=-1pt}]  (r0)--(l0);
    \node at (5,0) [right] {};
    \end{tikzpicture} 
    \end{center}
    
To make this subnetwork dynamically equivalent to weakly reversible single linkage class, it therefore suffices to find appropriate reactions with targets $X_1+ X_2 + X_4, 2X_1 + X_2, X_1+ X_2 + X_5$ and $X_1 + X_2 + X_6$. We can accomplish this using the following:
\begin{enumerate}[(i)]
\item \emph{Reaction with target $2X_1 + X_2$}: We split the reaction $X_1 + 2X_2 \rightarrow X_1 + X_2 + X_3$ into
\begin{eqnarray*}
\begin{split}
X_1 + 2X_2 &\rightarrow 2X_1 + X_2\ \rm{and} \\
X_1 + 2X_2 &\rightarrow 2X_2 + X_3,
\end{split}
\end{eqnarray*} 
since $(0,-1,1,0,0,0)^T = (1,-1,0,0,0,0)^T + (-1,0,1,0,0,0)^T$.
\item \emph{Reaction with target $X_1+ X_2 + X_4$}:  We split the reaction $X_1+ X_2 + X_3 \rightarrow X_2+ X_3 + X_4$ into 
\begin{eqnarray*}
\begin{split}
X_1+ X_2 + X_3 &\rightarrow X_2+ 2X_3\ \rm{and} \\
X_1+ X_2 + X_3 &\rightarrow X_1 + X_2+ X_4,
\end{split}
\end{eqnarray*}
since $(-1,0,0,1,0,0)^T = (-1,0,1,0,0,0)^T + (0,0,-1,1,0,0)^T$.
\item \emph{Reaction with target $X_1+ X_2 + X_5$}: We split the reaction $X_5+ X_6 + X_1 \rightarrow X_6+ X_1 + X_2$ into 
\begin{eqnarray*}
\begin{split}
X_5+ X_6 + X_1 &\rightarrow X_1+ X_2 + X_5\ \rm{and} \\
X_5+ X_6 + X_1 &\rightarrow X_1 + 2X_6,
\end{split}
\end{eqnarray*}
since $(0,1,-1,0,0,0)^T = (0,1,0,-1,0,-1)^T + (0,0,-1,1,0,1)^T$
\item\label{seq:6} \emph{Reaction with target $X_1+ X_2 + X_6$}: This can done with the following sequence of reactions:
\begin{eqnarray*}
X_1+ X_2 + X_6 &\rightarrow X_1+ X_2 + X_3 \\
X_1+ X_2 + X_3 &\rightarrow X_2+ X_3 + X_4 \\
X_2+ X_3 + X_4 &\rightarrow X_3+ X_4 + X_5 \\
X_3+ X_4 + X_5 &\rightarrow X_4+ X_5 + X_6 \\
X_4+ X_5 + X_6 &\rightarrow X_5+ X_6 + X_1 \\
X_5+ X_6 + X_1 &\rightarrow X_1+ X_2 + X_6,
\end{eqnarray*}
which are known to exist in $\tilde{\GG}_{\rm recomb(6)}$.
\end{enumerate}
Repeating this procedure for the remaining network, we obtain a weakly reversible reaction network. In addition, the sequence of reactions described in~\ref{seq:6} ensures that this network consists of a single linkage class. Therefore, the reaction network $\tilde{\GG}_{\rm recomb(6)}$ can be made dynamically equivalent to a weakly reversible reaction network with a single linkage class. 
\end{proof}

\begin{corollary}
Any variable-$k$ dynamical system generated by $\tilde{\GG}_{\rm recomb(4)},\tilde{\GG}_{\rm recomb(5)}$ and $\tilde{\GG}_{\rm recomb(6)}$ is permanent.
\end{corollary}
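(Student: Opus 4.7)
The plan is to combine the three preceding propositions with the permanence result for weakly reversible single linkage class networks. Specifically, Propositions~\ref{prop:G_4_WR}, \ref{prop:G_5_WR}, and \ref{prop:G_6_WR} state that each of $\tilde{\GG}_{\rm recomb(4)}$, $\tilde{\GG}_{\rm recomb(5)}$, $\tilde{\GG}_{\rm recomb(6)}$ can be made dynamically equivalent to a weakly reversible reaction network with a single linkage class. Dynamical equivalence (Definition~\ref{defn:dynamical_equivalence}) means that the two networks generate the \emph{same} polynomial vector field under mass-action kinetics, so any variable-$k$ mass-action system on $\tilde{\GG}_{\rm recomb(n)}$ corresponds to a variable-$k$ mass-action system on the weakly reversible, single-linkage-class network; the uniform bounds $\epsilon\le k(t)\le 1/\epsilon$ are preserved up to a possible rescaling of $\epsilon$, since dynamical equivalence amounts to redistributing the rate constants through the vertex-by-vertex equality in Definition~\ref{defn:dynamical_equivalence}.

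Next, I would invoke the theorem of Boros and Hofbauer~\cite{boros2020permanence}, which guarantees that every variable-$k$ mass-action system on a weakly reversible reaction network with a single linkage class is permanent. Applying this result to each of the three equivalent networks yields permanence of the associated variable-$k$ dynamical systems on $\tilde{\GG}_{\rm recomb(4)}$, $\tilde{\GG}_{\rm recomb(5)}$, and $\tilde{\GG}_{\rm recomb(6)}$, since permanence is a property of the vector field (equivalently, of its trajectories) and is therefore preserved under dynamical equivalence.

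There is no serious obstacle beyond verifying this bookkeeping: the structural work has already been done in the propositions above, and the analytic input is the cited permanence theorem. The only subtlety worth flagging in the write-up is that the rate-constant bounds on the variable-$k$ coefficients on $\tilde{\GG}_{\rm recomb(n)}$ translate to analogous bounds on the weakly reversible equivalent, because the reassignment of rate constants in the proofs of Propositions~\ref{prop:G_4_WR}--\ref{prop:G_6_WR} is performed by a fixed linear decomposition at each source vertex, yielding a uniform comparison between the two rate vectors that is independent of $t$.
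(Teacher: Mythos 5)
Your proposal is correct and follows essentially the same route as the paper: the paper's proof simply cites Propositions~\ref{prop:G_4_WR}--\ref{prop:G_6_WR} together with the permanence theorem of~\cite{boros2020permanence} for weakly reversible single-linkage-class networks. Your additional remark about the rate-constant bounds being preserved under the vertex-by-vertex redistribution of dynamical equivalence is a sensible (if implicit in the paper) bookkeeping check.
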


\begin{proof}
We have shown above that $\tilde{\GG}_{\rm recomb(4)},\tilde{\GG}_{\rm recomb(5)}$ and $\tilde{\GG}_{\rm recomb(6)}$ are dynamically equivalent to weakly reversible networks with single linkage class. It follows from~\cite{boros2020permanence} that any variable-$k$ dynamical system generated by them is permanent.
\end{proof}

There is a larger family of recombinant networks which can be shown to be strongly endotactic, as we show in Theorem~\ref{thm:property_X}. We first need a lemma about truncated $n$-simplexes. A truncated $n$-simplex is constructed by truncating (cutting off) each vertex at third of the length of the edge of the regular $n$-simplex. Figure~\ref{fig:truncated_n_simplex} shows a few examples of truncated simplices for $n=2$ and $n=3$. Note that the facet of a truncated $n$-simplex is either a face of a $(n-1)$-simplex or a face of a truncated $(n-1)$-simplex.

\begin{figure*}[h!]
\centering
\includegraphics[scale=0.5]{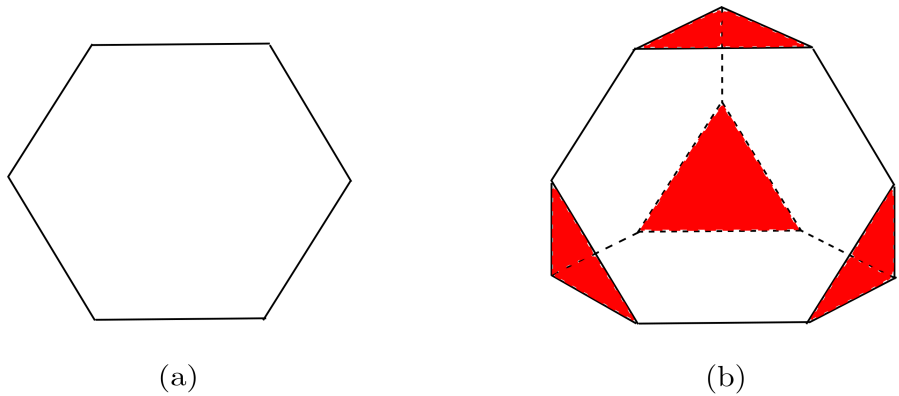}
\caption{(a) Truncated 2-simplex, which is a hexagon. The facets of the hexagon are truncated line segments. (b) Truncated 3-simplex. The facets of the truncated 3-simplex are hexagons and triangles (marked in red).}\label{fig:truncated_n_simplex}
\end{figure*} 
\begin{lemma}\label{lem:face_truncated_simplex}
For $n\geq 3$, any face of the truncated $n$-simplex is either a face of a $(n-1)$-simplex or an entire truncated $r$-simplex where $r\leq n-1$, but not both. 
\end{lemma}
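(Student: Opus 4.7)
I would prove the lemma by induction on $n$, with base case $n = 3$. The induction will rest on the structural observation stated just before the lemma: every facet of a truncated $n$-simplex is either an $(n-1)$-simplex (namely the small cross-section created when a vertex of the original $n$-simplex is truncated) or a truncated $(n-1)$-simplex (namely one of the original $(n-1)$-faces of the $n$-simplex, with each of its $n$ vertices truncated).

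For the base case $n = 3$, I would enumerate directly. The truncated tetrahedron has $4$ triangular facets and $4$ hexagonal facets; the former are $2$-simplices (hence faces of a $2$-simplex), and the latter are truncated $2$-simplices. Each edge of the truncated tetrahedron is a $1$-simplex, and each vertex a $0$-simplex; both are faces of a $2$-simplex. I would then check the "not both" clause by noting that the hexagonal facets, having six vertices, cannot be simplices, while the triangular facets are not truncated $r$-simplices for $r \geq 2$ (since a truncated $r$-simplex has $r(r+1) \geq 6$ vertices and contains hexagonal $2$-faces when $r \geq 2$).

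For the inductive step, suppose the statement holds for truncated $(n-1)$-simplices with $n - 1 \geq 3$, i.e., for $n \geq 4$. Let $F$ be any proper face of the truncated $n$-simplex. Then $F$ is contained in at least one facet $P$. If $P$ is an $(n-1)$-simplex, then $F$ is a face of an $(n-1)$-simplex and we are done. If $P$ is a truncated $(n-1)$-simplex, then by the inductive hypothesis $F$ is either a face of an $(n-2)$-simplex (hence also a face of an $(n-1)$-simplex, since any $(n-2)$-simplex embeds as a facet of an $(n-1)$-simplex) or an entire truncated $r$-simplex with $r \leq n - 2 \leq n - 1$. This exhausts the cases. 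The "not both" assertion is inherited: it is decided by the combinatorial type of $F$, independent of the facet $P$ through which we access it; and as in the base case, for $r \geq 2$ a truncated $r$-simplex is not a simplex, being distinguished by its vertex count $r(r+1)$ or by possessing hexagonal $2$-faces.

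\textbf{Main obstacle.} The principal subtlety is the "not both" clause, because a truncated $1$-simplex is geometrically indistinguishable from a $1$-simplex, so strictly speaking an edge of the truncated $n$-simplex is both a face of an $(n-1)$-simplex and a truncated $1$-simplex. I would interpret the lemma (as seems implicit from its use in establishing dichotomies for strong endotacticity arguments) as excluding the trivial $r = 1$ case, and I would make this convention explicit at the start of the proof. A second minor care-point is that a codimension-two or higher face $F$ lies in several facets; one must check that the classification assigned to $F$ via any facet containing it agrees, which it does because the classification ultimately depends only on the intrinsic combinatorial type of $F$.
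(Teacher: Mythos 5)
Your proof is correct and follows essentially the same route as the paper's: induction on the dimension using the decomposition of the facets of the truncated $n$-simplex into the small cross-section simplices and the truncated original facets, with base case $n=3$. The only divergence is your resolution of the degenerate $r=1$ ambiguity: you exclude truncated $1$-simplices by convention, whereas the paper keeps them and instead classifies the middle-third edges shared by two hexagonal facets as truncated $1$-simplices (not as faces of any cross-section simplex), which is the reading the case split in Theorem~\ref{thm:property_X} actually relies on.
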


\begin{proof}
By definition, a face of a truncated $n$-simplex is either a face of a $(n-1)$-simplex or a face of a truncated $(n-1)$-simplex. It therefore suffices to show that a face of a truncated $(n-1)$-simplex that is not a face of a $(n-1)$-simplex is an entire truncated $r$-simplex for some $r\leq n-1$. We show this by induction. Base case ($n=3$): Refer to Figure~\ref{fig:truncated_n_simplex}.(b). Here, the face of a truncated 3-simplex that is not a face of the 2-simplex (where the cut happened) is either an entire hexagon (which is a truncated 2-simplex) or a certain side of the hexagon (which is a truncated 1-simplex).

Let us assume that the property mentioned above is true for $m<n-1$. The induction hypothesis gives us that a face of a truncated $(n-2)$-simplex that is not a face of an $(n-2)$-simplex is an entire truncated $r$-simplex for some $r\leq n-2$. By definition, the face of a truncated $(n-1)$-simplex is either a face of a $(n-2)$-simplex or a face of a truncated $(n-2)$-simplex. Now note that a face of a truncated $(n-1)$-simplex that is not a face of the $(n-1)$-simplex cannot be a face of a $(n-2)$-simplex. This implies that it is a face of a truncated $(n-2)$-simplex that is not a face of an $(n-2)$-simplex. By the induction hypothesis, we get that this face is an entire truncated $r$-simplex for some $r\leq n-1$.
\end{proof}

\begin{theorem}\label{thm:property_X}
Consider an E-graph $\GG_n=(V,E)$ with $n$ species $X_1, X_2,..., X_n$ having the following property: for any mutually distinct indices $i,j\in\{1,2,...,n\}$, there exists a reaction $X_i+X_j \rightarrow X_i+X_j + X_k$ in $\GG_n$ with $k\notin\{i,j\}$, and there exists a way to use $X_i,X_j,X_k$ to make a fourth species, then a fifth species, and so on, until we get all the $n$ species. Let us denote by $\tilde{\GG_n}$ the reaction network corresponding to the relative populations of $\GG_n$. Then any variable-$k$ dynamical system generated by $\tilde{\GG_n}$ is permanent.
\end{theorem}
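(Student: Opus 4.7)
The plan is to show strong endotacticity of $\tilde{\GG_n}$; permanence then follows from \cite{gopalkrishnan2014geometric}. The first step is to pin down the convex hull of the source vertices of $\tilde{\GG_n}$. By Theorem~\ref{thm:relative_autocatalysis}, every reaction of $\tilde{\GG_n}$ has source $X_p + X_i + X_j$ (with $i \neq j$) and target $X_i + X_j + X_k$ (with $i,j,k$ distinct). Because every reaction of $\GG_n$ covered by the hypothesis uses two distinct reactants, the point $3X_m$ is not a source; on the other hand, the existence clause supplies a reaction out of every distinct pair $X_m + X_\ell$, so taking $p = m$ in Theorem~\ref{thm:relative_autocatalysis} realises $2X_m + X_\ell$ as a source. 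I claim the convex hull of the sources is exactly the truncated $(n-1)$-simplex with corners $\{2X_m + X_\ell : m \neq \ell\}$, and that all target vertices lie inside this hull, which puts us in the setting of Proposition~\ref{prop:endotactic_face_reaction}.

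Next, I argue by contradiction via Corollary~\ref{prop:not_endotactic}: suppose $\tilde{\GG_n}$ is not strongly endotactic, so some proper face $F$ of the truncated simplex has the trapping property that every reaction with source on $F$ has target on $F$. Lemma~\ref{lem:face_truncated_simplex} gives two possibilities: (Type~1) $F$ is a face of one of the $(n-2)$-simplices produced by truncating a corner $3X_m$, or (Type~2) $F$ is an entire truncated $r$-simplex on some proper subset $T \subsetneq \{1,\dots,n\}$.

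Type~1 is handled immediately: $F$ sits in the hyperplane $\{x_m = 2\}$ and has vertex set $\{2X_m + X_\ell : \ell \in T'\}$ for some nonempty $T' \subseteq \{1,\dots,n\}\setminus\{m\}$; for any $j \in T'$, the hypothesis gives a $\GG_n$-reaction $X_m + X_j \to X_m + X_j + X_k$ with $k \notin \{m,j\}$, lifting to the $\tilde{\GG_n}$-reaction $2X_m + X_j \to X_m + X_j + X_k$ whose source is on $F$ but whose target has $x_m = 1$, hence off $F$, a contradiction. For Type~2, a point is on $F$ iff its coordinates outside $T$ vanish. For distinct $i,j \in T$, the $\tilde{\GG_n}$-reaction $2X_i + X_j \to X_i + X_j + X_k$ from the hypothesis has source on $F$, so trapping forces $k \in T$. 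I then bootstrap using the iterative-production clause: starting from $\{X_i, X_j, X_k\} \subseteq T$, each newly produced species $X_c$ comes from a $\GG_n$-reaction $X_a + X_b \to X_a + X_b + X_c$ with $a, b \in T$ (by induction), whose lift $2X_a + X_b \to X_a + X_b + X_c$ has source on $F$, so $c \in T$ as well. Since the chain produces all $n$ species, we obtain $T = \{1,\dots,n\}$, contradicting that $F$ is proper.

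The main obstacle I anticipate is the Type~2 bootstrap: one must check carefully that every step of the hypothesized chain is realised by a reaction whose reactants lie in the current pool $T$, so that the trapping assumption applies and keeps the pool inside $T$. A secondary subtlety is the geometric identification of the convex hull: one must verify that all the corners $2X_m + X_\ell$ really appear as sources (this uses the existence clause) and that the remaining sources $X_p + X_i + X_j$ with $p,i,j$ distinct are convex combinations of such corners, ensuring that the convex hull is exactly the truncated $(n-1)$-simplex so that Lemma~\ref{lem:face_truncated_simplex} applies.
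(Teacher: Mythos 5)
Your proposal is correct and takes essentially the same route as the paper: both establish strong endotacticity by identifying the convex hull of the sources as a truncated $(n-1)$-simplex, classifying its faces via Lemma~\ref{lem:face_truncated_simplex}, and handling the two face types with, respectively, a single lifted reaction $2X_i+X_j\to X_i+X_j+X_k$ and the iterative-production chain. Your Type~2 bootstrap is simply a more explicit rendering of the paper's appeal to the hypothesis that the chain eventually produces all $n$ species.
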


\begin{proof}
We will show that $\tilde{\GG}_n$ is strongly endotactic. It will then follow from~\cite{gopalkrishnan2014geometric} that any variable-$k$ dynamical system generated by it is permanent. Note that from the hypothesis of the theorem, we have that for all $i \neq j$ there exists reactions in $\GG_n$ that are of the form $X_i + X_j \rightarrow X_i + X_j + X_k$, with $k\neq\{i,j\}$. By Theorem~\ref{thm:relative_autocatalysis}, we get that $\tilde{\GG}_n$ has complexes of the form $2X_i + X_j$ or $X_i + 2X_j$ as sources, for all $i \neq j$, and these are the vertices of the truncated $(n-1)$-simplex. Therefore, the convex hull of the source vertices of $\tilde{\GG}_n$ is a truncated $(n-1)$-simplex. Since all the vertices of $\tilde{\GG}_n$ are contained in the convex hull of its source vertices, by Proposition~\ref{prop:endotactic_face_reaction}, $\tilde{\GG}_n$ is strongly endotactic if and only if for every face of the convex hull of the source vertices, there is a reaction with source on this face that points away from this face. Consider a face $f$ of the truncated $(n-1)$-simplex. By Lemma~\ref{lem:face_truncated_simplex}, we know that $f$ is either a face of a $(n-2)$-simplex or $f$ is an entire truncated $r$-simplex generated by some species $X_{i_1},...,X_{i_r}$ for some $r\leq n-2$. We have the following cases:
\begin{enumerate}
\item If $f$ is a face of a $(n-2)$-simplex: Take a point of the form $2X_i + X_j$ on $f$ and consider the following reaction in $\GG_n$: $X_i + X_j\rightarrow X_i + X_j + X_k$ where $k\notin\{i,j\}$. By Theorem~\ref{thm:relative_autocatalysis}, the network $\tilde{\GG}_n$ contains a reaction $2X_i + X_j\rightarrow X_i + X_j + X_k$. Note that $X_i + X_j + X_k$ does not belong to the $(n-2)$-simplex. Therefore, $X_i + X_j + X_k$ does not belong to $f$. 

\item If $f$ is an entire truncated $r$-simplex generated by some species $X_{i_1},...,X_{i_r}$: We now use the property that, for any distinct indices $i,j\in\{1,2,...,n\}$, there exists a reaction $X_i+X_j \rightarrow X_i+X_j + X_k$ in $\GG_n$ with $k\notin\{i,j\}$, and there exists a way to use $X_i,X_j,X_k$ to make a fourth species, then a fifth species,..., until we get all the $n$ species. Therefore, there exists a way to use some of the species $X_{i_1},...,X_{i_r}$ to obtain a species $X_{i_s}$ that is not on the face $f$, which gives us our desired reaction.

\end{enumerate}
Therefore, $\tilde{\GG}_n$ is strongly endotactic.
\end{proof}

\section{Discussion and future work}

Autocatalytic networks are ubiquitous in nature. The populations of species in autocatalytic networks have the potential to become unbounded in finite time. As a consequence, the key quantity in this context is relative populations. It is not clear before hand whether relative populations can be solutions of polynomial dynamical systems. Surprisingly, under time rescaling and certain assumptions on the original reaction network, we show that they become solutions of polynomial dynamical systems. Theorem~\ref{thm:dynamics_relative_concentrations} establishes this fact. In particular, in Theorem~\ref{thm:relative_autocatalysis}, we give explicit reaction networks that generate dynamics corresponding to the relative populations of bimolecular autocatalytic systems. 

In Section~\ref{sec:examples}, we study certain examples of bimolecular autocatalytic systems called \emph{autocatalytic recombination networks}. Using tools from reaction network theory, we show that variable-$k$ dynamical systems generated by the relative populations of certain autocatalytic recombination networks are permanent. A unifying feature of these networks is that the vertices corresponding to the relative population reaction networks are contained in the convex hull of their source vertices. Therefore, the variable-$k$ dynamical systems generated by various generalizations of these networks are permanent as long as the vertices stay inside this convex hull. It is notable that the network $\tilde{\GG}_{\rm recomb(n)}$ in Section~\ref{sec:recomb} can be made dynamically equivalent to a weakly reversible reaction network with single linkage class for $n\leq 6$. The method used to show this dynamical equivalence does not work for $n\geq 7$; nevertheless it is possible that these networks are strongly endotactic. Generalizing this to higher dimensions is one of the main challenges for future work. Another direction for future investigation would be to find conditions under which the results of ``gene targeting" via homologous recombination introduced in Section~\ref{sec:examples} will persist in the cell owing to our results showing persistence in such networks.

\bibliographystyle{amsplain}
\bibliography{Bibliography}

\end{document}